\documentclass{amsart}

\usepackage{graphicx}
\usepackage{amssymb}
\usepackage{amsthm}
\usepackage{amsmath}
\usepackage[overload]{empheq}
\usepackage{euscript}
\usepackage{placeins} 
\usepackage{subfigure}

{\theoremstyle{plain}%
  \newtheorem{theorem}{Theorem}[section]
  
  \newtheorem{proposition}{Proposition}[section]
  \newtheorem{lemma}{Lemma}[section]
  

  \newtheorem{remark}{Remark}
}

\def\R{\mathbb R}

\newcommand{\dd}{{\rm d}}

\usepackage{fancyhdr}
\fancyhf{}
\pagestyle{fancy}  
\renewcommand{\headrulewidth}{0pt}

\begin{document}

\fancypagestyle{plain}{ \fancyfoot{} \renewcommand{\footrulewidth}{0pt}}
\fancypagestyle{plain}{ \fancyhead{} \renewcommand{\headrulewidth}{0pt}}

~

  \vskip 2.1 cm   

\centerline {\bf \LARGE  Analytic solutions and numerical method}

\smallskip

\centerline {\bf \LARGE for a  coupled thermo-neutronic problem}

 \bigskip  \bigskip

\centerline { \large    Olivier Lafitte$^{ab}$ and Fran\c{c}ois Dubois$^{acd}$}

\smallskip 

\centerline { \it  \small   
  $^a$   International Research Laboratory 3457 du  Centre National de la Recherche Scientifique,}

\centerline { \it  \small  
Centre de Recherches Math\'ematiques, Universit\'e de Montr\'eal, Montr\'eal, QC, Canada.}

\centerline { \it  \small   
  $^b$ Universit\'e Paris 13, Sorbonne Paris Cit\'e, LAGA, CNRS (UMR 7539),}

\centerline { \it  \small  99 Avenue J.-B. Cl\'ement F-93430, Villetaneuse Cedex, France.}

\centerline { \it  \small   
  $^c$  Conservatoire National des Arts et M\'etiers, LMSSC laboratory, Paris, France. }

\centerline { \it  \small   
  $^d$ Laboratoire de Math\'ematiques d'Orsay, Universit\'e Paris-Saclay, France.}

\bigskip 

\centerline {June 18th, 2028    
  {\footnote {\rm  \small $\,$ This contribution has been presented at LAMA Chambery and CEA Saclay in Spring 2022,
    as well as OPFSA (Montreal) in June 2022. It is published in
    {\it Communications in Mathematical Sciences}, volume 24, issue 6, pages 1745-1707, 2026.}}}

\bigskip 

  \noindent Keywords: ordinary differential equation,  analytic functions,  thermohydraulics,  neutronics,
  Crank-Nicolson numerical scheme. 

\smallskip  \smallskip  \noindent 
AMS classification:  65L10, 80M99, 33E05
 
\bigskip 
\noindent {\bf Abstract } 

\noindent
We consider in this contribution a simplified idealized one-dimensional model in a nuclear core reactor coupling
the diffusion equation on the neutron flux with the enthalpy equation for the water which collects the heat produced
by this idealized nuclear core. These equations are coupled through the dependency of the coefficients of the diffusion
equation in terms of the enthalpy. We propose a numerical method treating globally the coupled problem for finding its unique solution. 
Simultaneously, we use incomplete elliptic integrals to represent analytically the density of neutrons and
the enthalpy in the fluid. Both methods lead to the same solution with high accuracy. However, another quantity,
generally used as a benchmark for comparing results, depends considerably on the approximation used
for the coefficients of the diffusion equation.

\bigskip \bigskip \bigskip

\section{Introduction}
\label{sec:intro}
%
\fancyhead[EC]{\sc{Olivier Lafitte and Fran\c{c}ois Dubois}} 
\fancyhead[OC]{\sc{Analytic solutions and numerical method for a  coupled problem}}
\fancyfoot[C]{\oldstylenums{\thepage}}
%
This paper deals with a multiphysics problem which arises in the engineering community of the nuclear fission. We begin by describing the global principles of the two different physics involved in our problem.\\
\underline{The first one} is the neutronics problem for nuclear reactors. Heat release occurs at the occasion of the fission reaction in Uranium (for example). The neutrons in a reactor follow the Boltzmann system of transport equations with collisions. This system is approximated in the neutronicians community by an instationary diffusion equation (see for example Chapter 5 of \cite{lamarsh} and in particular Eq. (5.16-5.17)) on the neutron flux $\phi$ which models the quantity of neutrons that are locally produced in the fission reaction (see Section 3 of \cite{lamarsh} for the study of the microscopic fission and absorption cross sections and Section 6 of the same reference for the instationary diffusion equation Eq. (6.1)). The heat production induced by this reaction is then proportional to $\phi$. 

This instationary diffusion equation on $\phi$ is replaced in the nuclear studies by a stationary diffusion equation (on a similar quantity $\varphi$) where a physically relevant scalar quantity $k$ appears, called the {\bf multiplication factor} (Eq. (6.2) of the same reference). A reactor is said to be {\it critical} if $k=1$ (see Section 6, Eq. (6.2), (6.10), (6.12) of \cite{lamarsh}). The numerical value of $k$ obtained in a calculation or simulation yields a possible criterion for instability of the device which is simulated. 

At each point of the medium, the coefficients of the instationary diffusion equation depend on thermodynamical variables, in particular on the enthalpy of the system. This dependency, in the specialized literature, is called the feedback of the fluid, and an example of this dependency is the Doppler effect (when the temperature of the fluid increases, the cross section 'flattens' see \cite{CEA-monograph}). The coefficients are solely tabulated from the experiments. 

We do not consider, in this elementary model, the evolution equation on the neutron precursors.

\underline{The second one} is the fluid model, from which one may derive the enthalpy hence the temperature.

We use in our study a simple model, called the Low Mach number approximation, which corresponds to a quasi-isobaric situation. This model was proposed by Dellacherie \cite{SD} as a 3d model. The applications considered are axisymmetric problems, and a first model under consideration would be a 1d approximation, where the velocity of the fluid has only one component and the water flow is constant. This yields the simple thermohydraulic model of a transport equation for the enthalpy where the heat source is proportional to the density of neutrons. 

Few theoretical and numerical studies have been done on the coupled physical problem, and all these studies are rather complex. Communities in the industry focused mostly on the coupling of efficient and specialized numerical codes. Among these studies and codes, one may mention Nuresafe (2013-2015) \cite{nuresafe}, or code coupling platforms (Salomé \cite{hemera} and Mouse \cite{mouse}).

In order to construct a global solution of the multiphysics problem, a	simplified model has been proposed by Dellacherie and L. \cite{AML}. It is a model relying on a system of nonlinear differential equations (in dimension 1) with three unknowns: the functions $\varphi$ (density of neutrons) and $h$ (enthalpy of the system) and the multiplication factor $k$ mentioned above. As the equation on the neutron flux is of order 2 and the equation for the enthalpy is of order 1, and as we have four boundary conditions, an equation for the multiplication factor follows. In this paper, we propose to study this equation for the multiplication factor in a particular case for the model of Dellacherie and L.. In explicit cases for the Doppler effect, we calculate analytically $k$ and $\varphi$ and, in the general case, we construct a numerical procedure which gives an extremely precise solution $(k,\varphi)$.

\section{Statement of the problem}
\label{sec:statement}
The instationary diffusion equation on the neutron flux is
\begin{equation}\label{dagger}\frac{1}{v}\partial_t\phi-(D\phi')'+\Sigma_a\phi =\nu \Sigma_f\phi, \mbox{on } (0,L)\times (0, +\infty) \end{equation}

where $v$ is the velocity of the neutrons, $D$ is the diffusion coefficient, $\Sigma_a$ is the absorption cross-section, $\Sigma_f$ is the macroscopic fission cross section, and $\nu$ is the number of neutrons generated by a fission.

It is replaced by:\\
$\bullet$ the stationary diffusion equation on $[0,L]$
\begin{equation}-(D\varphi')'+\Sigma_a\varphi =\frac{\nu \Sigma_f\varphi}{k},\label{neutronics}\end{equation}
where $\varphi$ satisfies the homogeneous Dirichlet condition at $z=0$ and $z=L$, called therein the {\bf neutronics model}, where $k$ is the multiplication factor mentioned in the Introduction\\
$\bullet$ and the simplified equation on the enthalpy, which reads
\begin{equation}\label{thermo}D_eh'=K\varphi,\end{equation}
where $D_e$ is the water flow and $K$ the production of energy for a neutron, and $h$ is given at $z=0$ and $z=L$: $h(0)=h_e, h(L)=h_s$, $h_e$ and $h_s$ given, which is called the {\bf thermohydraulics model}.\\
The coupling of these two equations comes through the dependency of the coefficients of (\ref{neutronics}) in $h$, (through, for example feedback). Note that the feedback is normally translated into a dependency of $\Sigma_a$ in $h$, we consider here a different model for which $\Sigma_a$ constant and $\Sigma_f$ depends on $h$.

Recall first the construction of an analytic solution of the problem (\ref{neutronics})-(\ref{thermo}), subject to the above four boundary conditions,  on $[0, L]$. This problem is  addressed by Dellacherie and L. in \cite{AML}. 

We assume in this study that $L, D, \Sigma_a$, constants, are linked by the relation $D=L^2\Sigma_a$, which allows to simplify this problem by considering $D, \Sigma_a$ and $L$ equal to 1 in the diffusion equation. The constants $D_e$ and $K$ are chosen as 1 in the equation for the enthalpy $D_e h'= K\phi$, $h_e=0$, $h_s=1$ without losing generality (this is indeed a choice of units for the enthalpy). If one needs to keep a general problem (but it is not the purpose of this paper) one can for example write $\epsilon^{-1}= \frac{L^2\Sigma_a}{D}$, dimensionless, and the neutronics equation (\ref{neutronics}) rewrites $-\epsilon \varphi''+\varphi= \frac{\nu \Sigma_f(h)}{\Sigma_ak}\varphi$.

From now on, the domain is $z\in [0, 1]$ and we denote by $\Sigma(h)$ the function $\nu \Sigma_f(h)$. Note that the function $\Sigma$ depends on the variable $h$, and that, in the equation, the coefficient $\Sigma$ depends on the unknown $h: z\longmapsto h(z)$ and appears as $\Sigma(h(z))$ in the ODEs. Call, for simplicity $\lambda:=\frac{1}{k}$.

The ODEs are then, where the unknowns are $\varphi, h, \lambda$:
\vskip 0.3 cm
\noindent $-\varphi''+\varphi= \lambda \Sigma(h)\varphi$ for the stationary diffusion equation (the neutronic model),\\
\noindent$h'=\varphi$ for the enthalpy equation (the thermohydraulic model).
  \vskip 0.3 cm
  The associated boundary conditions are $\varphi(0)=\varphi(1)=0$ (no neutron flux at the boundaries of the domain) and $h(0)=0$, $h(1)=1$ (given difference of enthalpy between the extremities of the domain). As the system is of order 3 with four boundary conditions, this system yields a condition on $\lambda$.

The final system is
\begin{equation} \label{modele-couple} \left \{ \begin {array}{l}
    -\varphi''(z)+ \varphi(z) = \lambda \, \Sigma \big(h(z) \big) \,  \varphi(z) ,\,   h'(z) =  \varphi(z)   \,, \,
    0 < z < 1 \,, \\
    h(0) = 0 , \, h(1) = 1 , \, \varphi(0) = 0 ,\, \varphi(1) = 0 ,\, \lambda \geq 0 ,\, \varphi(z) > 0 \,\, {\rm if} \,\, 0 < z < 1 .   
  \end{array} \right. \end{equation}

We assume throughout this paper
  \begin{equation} \label{hypothese-Sigma} 
    \Sigma \,\,\,  {\rm continuous}, \,\,     \Sigma(h)\geq \Sigma_*>0, \,\,  {\rm for} \,\, 0\leq h\leq 1.
  \end{equation}

Let $\psi_{\lambda}$ solution of 

\begin{equation}\label{psilambda}\psi_{\lambda}''(X)=2-2\lambda \Sigma(X),\psi_{\lambda}(0)=\psi_{\lambda}(1)=0.\end{equation} 

Simple algebra, using the boundary conditions $\varphi(0)=0, \varphi(1)=0, h(0)=0, h(1)=1$ yields the equation $(\varphi(z))^2=\psi_{\lambda}(h(z))$, that is
$$(h'(z))^2= \psi_{\lambda}(h(z)).$$
For reader's convenience, we reproduce it here. One has
\begin{equation}\begin{array}{ll}-2\varphi''(z) + \psi_{\lambda}''(h(z))\varphi(z)=0&\rightarrow -2h'' +\psi_{\lambda}'(h(z))=c_0\\
&\rightarrow -2h''h'+h'\psi_{\lambda}'(h)= (c_0h+c_1)'\\
&\rightarrow -(h')^2+\psi_{\lambda}(h(z))=0,\end{array}\label{aaa}\end{equation}
the last equation being obtained by using the boundary conditions on $h, \varphi$.
As $\varphi\geq 0$, $h$ is increasing hence $\frac{h'(z)}{\sqrt{\psi_{\lambda}(h(z))}}=1$. This yields, for all $z\geq 0$ such that $0\leq h(z)\leq 1$,
$$\int_0^{h(z)}\frac{dh}{\sqrt{\psi_{\lambda}(h)}}=z, \varphi(z)= \sqrt{\psi_{\lambda}(h(z))}.$$
The equation on $\lambda$ is then
\begin{equation}I_{\lambda}:=\int_0^1\frac{dh}{\sqrt{\psi_{\lambda}(h)}}=1.\label{equation-k}\end{equation}
It has been proven in \cite{AML} that this equation has a unique solution $\lambda_*>0$ which implies
   \begin{lemma}
  \label{lem:hstar}
  System (\ref{modele-couple}) has a unique solution $(\lambda_*, h_*, \phi_*)$ where $h_*\in C^1([0,1])$, $\phi_*\in C^2([0,1])$.
  \end{lemma}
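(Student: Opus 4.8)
The plan is to reduce the coupled boundary-value problem (\ref{modele-couple}) to the single scalar equation on $\lambda$ isolated just before the statement, and then to read off existence and uniqueness from Lemma \ref{lem:keff}. First I would exploit the constraint $\varphi > 0$ on $(0,1)$, which by $h' = \varphi$ means $h$ is a strictly increasing $C^1$ bijection of $[0,1]$ onto $[0,1]$, so that $h$ may be taken as the independent variable. Writing $\varphi$ as a function of $h$ and setting $\psi(h) := \varphi^2 = (h')^2$, the operator identity $\frac{\dd}{\dd z} = \varphi\,\frac{\dd}{\dd h}$ turns the neutronic equation $-\varphi'' + \varphi = \lambda\Sigma(h)\varphi$, after dividing by $\varphi$, into
$$\tfrac12\,\frac{\dd^2}{\dd h^2}\big(\varphi^2\big) = 1 - \lambda\,\Sigma(h),$$
so that $\psi$ solves $\psi'' = 2 - 2\lambda\Sigma(h)$, while $\varphi(0) = \varphi(1) = 0$ gives $\psi(0) = \psi(1) = 0$; that is, $\psi = \psi_{\lambda}$. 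Conversely, for any $\lambda$ with $\psi_{\lambda} > 0$ on $(0,1)$, I would reconstruct a candidate solution through the quadrature $z = \int_0^{h} \dd s/\sqrt{\psi_{\lambda}(s)}$ together with $\varphi = \sqrt{\psi_{\lambda}(h)}$: differentiating $\varphi^2 = \psi_{\lambda}(h)$ and dividing by $\varphi > 0$ yields $\varphi' = \tfrac12\psi_{\lambda}'(h)$ and then $\varphi'' = (1 - \lambda\Sigma(h))\varphi$, so $(h,\varphi)$ solves both ODEs with $h(0) = 0$. The only remaining condition $h(1) = 1$ is exactly the scalar equation
$$\int_0^1 \frac{\dd s}{\sqrt{\psi_{\lambda}(s)}} = 1,$$
which is the equation $h(L,k_{eff}) = h_s$ of Lemma \ref{lem:keff} specialised to $L = h_s = 1$ and $\lambda = 1/k_{eff}$.

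Next I would check that this quadrature is legitimate and that the admissible range of the parameter is the one appearing in Lemma \ref{lem:keff}. Once $\lambda > 1/\Sigma_*$ one has $\psi_{\lambda}'' = 2 - 2\lambda\Sigma(h) < 0$ by (\ref{hypothese-Sigma}), so $\psi_{\lambda}$ is strictly concave and vanishing at the endpoints, hence positive on $(0,1)$ with $\psi_{\lambda}'(0) > 0 > \psi_{\lambda}'(1)$; the simplicity of these zeros makes the improper integral converge like $\int \dd s/\sqrt{s}$ at each endpoint. More generally the interval $(0,(\max_{[h_e,h_s]}\mathcal{Y}/\mathcal{X})^{-1})$ of Lemma \ref{lem:keff} is precisely the set of $k_{eff}$ for which $\Psi(\cdot,k_{eff}) = \psi_{\lambda}$ remains positive on $(0,1)$, so the quadrature is defined throughout it. Since the coefficients of the simplified model ($D = \Sigma_a = 1$, $\Sigma$ continuous with $\Sigma \geq \Sigma_* > 0$) satisfy the hypotheses of \cite{DL,AML}, Lemma \ref{lem:keff} applies and furnishes a unique $k_{eff}^{*}$ in that interval, i.e. a unique admissible $\lambda_{*} = 1/k_{eff}^{*}$ solving the scalar equation; uniqueness rests on the monotonicity recalled after Lemma \ref{lem:keff}, namely that $k_{eff} \mapsto h(L,k_{eff})$ decreases from $+\infty$ to $0$.

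Finally I would reconstruct $(h_{*},\varphi_{*})$ from $\lambda_{*}$ by the formulas above and record the regularity. Since $\psi_{\lambda_{*}} \in C^2$ (its second derivative $2 - 2\lambda_{*}\Sigma$ is continuous) and $h_{*} \in C^1$, the identity $\varphi_{*}' = \tfrac12\psi_{\lambda_{*}}'(h_{*})$ shows $\varphi_{*}' \in C^1$, hence $\varphi_{*} \in C^2([0,1])$; then $h_{*}' = \varphi_{*}$ gives $h_{*} \in C^1([0,1])$, while $h_{*}(0) = 0$, $h_{*}(1) = 1$, $\varphi_{*}(0) = \varphi_{*}(1) = 0$ hold by construction. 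Uniqueness of the triple follows because any solution must, by the first paragraph, produce a $\psi_{\lambda}$ with $\int_0^1 \dd s/\sqrt{\psi_{\lambda}} = 1$, forcing $\lambda = \lambda_{*}$ and then $h$ and $\varphi$ through the quadrature.

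I expect the main obstacle to be the reduction step rather than the final counting of solutions: one must justify rigorously the change of variable to $h$ (valid only because $\varphi > 0$, which degenerates exactly at the two endpoints), the convergence of the endpoint-singular quadrature, and above all the fact that the admissible $k_{eff}$-interval of Lemma \ref{lem:keff} coincides with the positivity set of $\psi_{\lambda}$ — this is what guarantees that the unique root $\lambda_{*}$ actually lies in the region where the reconstruction is valid. Once this equivalence is established, existence, uniqueness, and the stated $C^1$/$C^2$ regularity are immediate consequences of Lemma \ref{lem:keff}.
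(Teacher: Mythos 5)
Your proposal is correct and takes essentially the same route as the paper, which offers no separate proof of this lemma but derives it directly from Lemma \ref{lem:keff} via the reduction $h'=\sqrt{\psi_\lambda(h)}$ and the scalar equation $\int_0^1 \dd h/\sqrt{\psi_\lambda(h)}=1$ described in Sections 1--2. Your write-up simply makes that implication explicit, including the endpoint degeneracy of the change of variable and the $C^1$/$C^2$ regularity bookkeeping.
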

  
  The aim of this paper is to present, in the case where one knows three values of the function $\Sigma$, an analytic and symbolic method which finds {\bf exact} solutions of (\ref{modele-couple}) using the incomplete Jacobi functions (which are its solutions) when $\Sigma$ is a polynomial of degree less than 2. This is not unrealistic; in the community of researchers in the nuclear industry, few different values of the cross sections are known in a simulation for a given case. These analytic examples are important as well: they allow uncertainty studies on these values of the cross sections. 
  
  We also present a numerical method which solves (\ref{modele-couple}) without using the numerical methods traditionally used for solving each ODE but rather concentrating on solving the equation (\ref{equation-k}). 

The numerical and analytic methods both allow to show that the result (value of $\lambda_*$ and graph of $\varphi$) does not depend much on the  interpolation chosen for the function $\Sigma$, however the precision demanded by benchmarks is smaller than these variations based on the interpolation method (see conclusion).
  
  This numerical method, as well as analytic and symbolic methods, are implemented when one knows only three values of $\Sigma$, and for simplicity again one assumes that one knows $\Sigma(0)=\sigma_0$, $\Sigma(\frac12)=\sigma_{\frac12}$, $\Sigma(1)=\sigma_1$.

\section{Analytical approach}
\label{sec:analytical} 

We study in this section four representations of the function $\, \Sigma \, $ which lead
to exact analytical solutions of (\ref{modele-couple}) using the incomplete elliptic integrals.
It corresponds to cases for the values $\sigma_0, \sigma_{1/2} , \sigma_1$  and four cases for the function $\Sigma$: \\
$\bullet$ $\, \Sigma \, $ constant (called Zero-th case later) ($  \sigma_0 = \sigma_{1/2} = \sigma_1 $),\\
$\bullet$ $\, \Sigma \, $ an affine polynomial ($ \sigma_{1/2} = {1\over2} \, (\sigma_0 + \sigma_1) $) (First case), \\
and, when $ \sigma_{1/2} \not= {1\over2} \, (\sigma_0 + \sigma_1) $, we consider two cases:\\
$\bullet$ $\, \Sigma \, $ the interpolation polynomial of degree 2 defined by the three input data (Second case),\\
$\bullet$ $ \, \sigma_0 $, $ \, \sigma_{1/2} \, $ and $ \, \sigma_1 \, $ and $ \,  \Sigma \, $
continuous piecewise affine defined by the three previous input data  (called Third case in the sequel). \\
The model considered here is a toy example where $\Sigma$ is known at the three points $h=0, h=1/2$ and $h=1$ and has the respective values $\sigma_0, \sigma_{1/2} , \sigma_1$. Our analytic approach is easily generalized to any situation where $\Sigma(h_1), \Sigma(h_2), \Sigma(h_3)$ are given for $0\leq h_1<h_2<h_3\leq 1$. It can be extended {\it e.g.} Abramowitz and Stegun \cite{AS} to the case $\Sigma(h_p)$ given for $0\leq h_1<\cdots<h_N\leq 1$, $1\leq p\leq N$ but this is not the purpose of our study. 

Following  \cite{AS}, the incomplete elliptic integral of the first kind
$\, K(m,\, \varphi) \, $ is 
\begin{equation*} K(m,\, \varphi) = \int_0^{\,\varphi} {{\dd \theta}\over{\sqrt{1 - m \, \sin^2 \theta}}} \,, \,\,
  0 \leq \varphi \leq {{\pi}\over{2}} \,,\,\,  m < 1 .   \end{equation*}
The complete elliptic integral of the first kind $ \, K(m) \, $ is defined by

\centerline{ $ 
  K(m) \equiv K(m,\, {{\pi}\over{2}} ) \, . $}

In (\ref{modele-couple}),
we assume that $ \, \Sigma \, $ is known only through the
three positive real numbers $ \, \sigma_0 $, $ \, \sigma_{1/2} \, $ and $ \, \sigma_1 \, $
which are respectively the values of $ \, \Sigma \, $ at $ \, 0 $, $\, 1/2 \, $ and $\, 1 $. 
  In what follows, we establish that for four modelling of the function $ \, \Sigma \, $ from these values,
  it is possible to put in evidence an analytical approach to determine firstly the scalar  parameter $ \, \lambda > 0 \, $ and secondly
  the functions $ \,  z \longmapsto \varphi(z) \, $ and $ \, z \longmapsto h(z) $. 
  
  Although an efficient numerical method exists for solving $I_{\lambda}=1$ (Section \ref{sec:numerical} describes this method, see also Dubois \cite{Du24}), it is important as well to have analytical formulae for the solution of $I_{\lambda}=1$, denoted in each case by $\lambda(\sigma_0, \sigma_{1/2}, \sigma_1)$. The four expressions of $\lambda(\sigma_0, \sigma_{1/2}, \sigma_1)$, to which we add the expression obtained from the semi-anaytical method of Dellacherie et al. \cite{SYNASC16} are analytic through elliptic integrals. Precise studies of $\lambda(\sigma_0, \sigma_{1/2}, \sigma_1)$ are possible, in particular for uncertainty quantification \cite{LMoussa}: derivatives of elliptic functions are known.
  
 Note that these Jacobi incomplete elliptic functions also appear in another problem of interest, namely the ion distribution in a solute separated by two electrodes, which is a model problem widely studied for the electrokinetics of biological cells. Preliminary results for both problems were presented as a single contribution  at the 16th International Symposium on Orthogonal Polynomials, Special Functions and Applications (June 2022, Montreal, online).

The aim of this Section is to prove the following Theorem:
\begin{theorem}
\label{theorem-analytical}
Let $\lambda, \sigma_0, \sigma_{\frac12}, \sigma_1$ strictly positive real numbers given. For $\Sigma(X)$ the interpolation polynomial satisfying $\Sigma(j)=\sigma_j$ (cases denoted by zero-th case, case 1 and case 2), there exists two constants $C(\lambda, \sigma_0, \sigma_{\frac12}, \sigma_1)>0, m(\lambda, \sigma_0, \sigma_{\frac12}, \sigma_1)\in [0, 1)$ such that
$$I_{\lambda}= C(\lambda, \sigma_0, \sigma_{\frac12}, \sigma_1)K(m(\lambda, \sigma_0, \sigma_{\frac12}, \sigma_1)).$$
If $\Sigma$ is the piecewise affine function interpolating $\sigma_0, \sigma_{\frac12}, \sigma_1$, there exists four constants $C_-,C_+, m_-, m_+$ such that
$$I_{\lambda}= C_-(\lambda, \sigma_0, \sigma_{\frac12}, \sigma_1)K(m_-(\lambda, \sigma_0, \sigma_{\frac12}, \sigma_1))+C_+(\lambda, \sigma_0, \sigma_{\frac12}, \sigma_1)K(m_+(\lambda, \sigma_0, \sigma_{\frac12}, \sigma_1)).$$
The analytic formulae are given in the subsequent propositions.
\end{theorem}
\begin{proof}\noindent \\
$\bullet$ Zero-th case:  $ \, \Sigma \, $ is constant.

Then the system modelled by the previous set of equations 
is totally decoupled and an exact solution can be provided with elementary arguments. 
\begin{proposition} \label{P2-cas-zero-Sigma-constant}
  If $ \, \sigma_0 = \sigma_{1/2} =  \sigma_1 \equiv \mu > 0 $, the  problem (\ref{modele-couple}) admits a unique decoupled solution:
  we have

  \centerline {$  \lambda \, \mu = 1 + \pi^2 $, $ \, \varphi(z) = {{\pi}\over{2}} \, \sin(\pi \, z) \, $ and
  $\, h(z) = {1\over2}\, (1 - \cos(\pi \, z) ) $.} 
\end{proposition}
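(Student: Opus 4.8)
The plan is to exploit the fact that when $\Sigma$ is constant the coupling between the two equations of (\ref{modele-couple}) disappears, so the neutron equation reduces to a linear constant-coefficient eigenvalue problem. First I would substitute $\Sigma\big(h(z)\big) \equiv \mu$ into the first equation, which then reads $-\varphi'' + \varphi = \lambda\,\mu\,\varphi$, equivalently $\varphi'' + (\lambda\mu - 1)\,\varphi = 0$. This is now entirely decoupled from $h$, which is the sense in which the solution is ``decoupled''.

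Next I would solve this Dirichlet problem on $[0,1]$ with $\varphi(0)=\varphi(1)=0$. I would first rule out $\lambda\mu \leq 1$: if $\lambda\mu = 1$ then $\varphi$ is affine and the two boundary conditions force $\varphi \equiv 0$, while if $\lambda\mu < 1$ the general solution is a combination of $\sinh$ and $\cosh$ and again both boundary conditions force $\varphi \equiv 0$; in either case this contradicts $\varphi > 0$ on $(0,1)$. Hence $\lambda\mu - 1 = \omega^2 > 0$, and imposing $\varphi(0)=0$ gives $\varphi(z) = A\,\sin(\omega z)$. The condition $\varphi(1)=0$ forces $\omega = n\pi$ for a positive integer $n$, and the positivity requirement $\varphi > 0$ on $(0,1)$ selects the principal mode $n=1$. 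This yields $\lambda\mu = 1 + \pi^2$ and $\varphi(z) = A\,\sin(\pi z)$ with $A > 0$ still to be determined.

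The amplitude is then fixed by the enthalpy equation. Integrating $h' = \varphi$ with $h(0)=0$ gives $h(z) = \frac{A}{\pi}\,(1 - \cos(\pi z))$, and the remaining boundary condition $h(1)=1$ produces $\frac{2A}{\pi} = 1$, hence $A = \frac{\pi}{2}$. This delivers the announced expressions $\varphi(z) = \frac{\pi}{2}\,\sin(\pi z)$ and $h(z) = \frac12\,(1 - \cos(\pi z))$, which one checks directly to satisfy all of (\ref{modele-couple}).

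Uniqueness is the only point requiring care, and it can be settled in two concordant ways. On one hand, Lemma \ref{lem:hstar} already guarantees that (\ref{modele-couple}) has a unique solution, so exhibiting one explicit triple $(\lambda,h,\varphi)$ suffices. On the other hand, the eigenvalue analysis above shows directly that the positivity constraint admits no competing mode, so no appeal to the general lemma is strictly necessary here. I expect no serious obstacle beyond carefully justifying that $n=1$ is forced, which is the standard Sturm--Liouville fact that only the first eigenfunction keeps a constant sign.
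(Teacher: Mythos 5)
Your proof is correct and follows essentially the same route as the paper: decouple the flux equation, identify $\lambda\mu-1$ as the first Dirichlet eigenvalue on $(0,1)$ via the positivity constraint, then integrate $h'=\varphi$ and use $h(0)=0$, $h(1)=1$ to fix the amplitude. The only difference is that you spell out the elimination of $\lambda\mu\leq 1$ and of the higher modes, which the paper leaves implicit in the phrase ``first eigenvalue''.
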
 
\begin{proof} 
  From the relation $ \,    -\varphi'' = (\lambda \, \mu-1)\,  \varphi(z) \, $ and the conditions
  $ \, \varphi(0) =$ $ \varphi(1) = 0 $ with the constraint  $ \, \varphi(z) > 0 \, $ if $ \, 0 < z < 1 $,
  we deduce that $ \, (\lambda \, \mu-1) \, $ is the first eigenvalue of the Laplace equation
  on the interval $ \, (0,\, 1) \, $ wih Dirichlet boundary conditions. Then $\, \lambda \, \mu-1 =  \pi^2 \, $ and
  $ \,  \varphi(z) = C  \, \sin(\pi \, z) \, $ for some constant $\, C $. We integrate with respect to $ \, z \, $ this relation and we get, using $h(0)=0$,
  $ \, h(z) = {{C}\over{\pi}} \,(1- \cos(\pi \, z) )\,$ for $ \, 0 \leq z \leq 1 $. The  datum $\,  h(1) = 1 \, $ implies $ \, {{2\, C}\over{\pi}} = 1 $. \end{proof}

If the function $\, \Sigma \, $ is no more constant, it has been proven in \cite{SYNASC16,DL}
that the unknown of the problem can be obtained with the following process. First integrate twice the function $ \, \Sigma \, $ and
obtain a convex negative function $ \, V \, $ such that
\begin{equation}\label{Vfunction}
    {{\dd^2 V}\over{\dd h^2}} = \Sigma(h), V(0) = V(1) = 0.
    \end{equation}

  \noindent 
Second, recall that $\psi_{\lambda}$, defined by (\ref{psilambda}), is given by $ \, \psi_\lambda(h) \equiv h \, (h-1) - 2 \, \lambda \, V(h) $. Then the equation for the function $ \, z \longmapsto h(z) \, $
is $ \, h'(z) = \sqrt{\psi_\lambda(h(z))} \, $ and the conditions $ \, h(0)=0, h(1) = 1 \, $ give a scalar equation
for the unknown $ \, \lambda > 0 $:
\begin{equation}\mbox{ Find } \lambda \mbox{ such that } \,  \int_0^{\,1}\!\!  {{\dd h}\over{\sqrt{\psi_\lambda (h)}}} = 1.\end{equation}

A first step is to compute the integral
$ \, I_\lambda \equiv  \int_0^{\,1}\!\!  {{\dd h}\over{\sqrt{\psi_\lambda (h)}}} $. 

Once it is done, we can solve easily the equation $ \, I_\lambda = 1 \, $
with a Newton-like algorithm. When $ \, \lambda \, $ is determined, the explicitation of the functions 
$ \,  z \longmapsto \varphi(z) \, $ and $ \, z \longmapsto h(z) \, $ is not difficult.
Thus the method we propose is founded on an analytical determination of the integral $ \, I_\lambda $.
We focus our attention to this question in the next sub-sections. A by-product of these calculations are the expressions of $h$ and $\varphi$.

The proof of Theorem \ref{theorem-analytical} relies on Propositions \ref{P2-cas-Sigma-affine} and \ref{homographic-change} that follow.

$\bullet$ The first case where there is coupling is the case where $ \, \Sigma \in P_1 $.

In this case, the function $ \, \Sigma \, $ is a positive affine function on the interval $ \, (0,\, 1 ) $.
We set $ \, \sigma_0 = \mu\, (1-\alpha) $, $\, \sigma_{1/2} = \mu \, $ and $ \, \sigma_1 = \mu\, (1+\alpha) $.
Then $ \, \mu > 0 \, $ and $ \, | \alpha | < 1 \, $ to satisfy the constraint of positivity.
We introduce the notation \begin{equation}\label{xi} \, \xi \equiv  \lambda\,\mu.\end{equation}

\begin{proposition} \label{P2-cas-Sigma-affine}
  In the case $ \, \Sigma \in P_1  $, the functions $ \, V \, $ and $ \, \psi_\lambda \, $
  admit the algebraic expressions 

  \centerline {$
 V(h) = \mu \, h \, (h-1) \, \big( {1\over2} - {{\alpha}\over{6}} + {{\alpha}\over{3}} \, h \big) \,,\,\,   
  \psi_\lambda(h) =  h \, (h-1) \, \big( 1-\xi \, (1-{{\alpha}\over{3}}) - {{2\, \alpha\, \xi}\over{3}} \, h \big)  $.}

  \noindent
  Then we have $ \, I_\lambda = {{2}\over{\sqrt{\xi \, \big( 1+ {{|\alpha|}\over{3}} \big) - 1}}} \, K(m) \, $ with
  $ \, m = {{2 \, |\alpha| \, \xi}\over{3 \, \xi + |\alpha| \, \xi - 3}} $. 
\end{proposition}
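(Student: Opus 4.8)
The plan is to obtain the two algebraic identities by elementary integration, and then to reduce the resulting integral $I_\lambda$ to a complete elliptic integral by a substitution that matches one of the normal forms of Lemma \ref{integrales-elliptiques}.

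First I would determine $V$. Since $\Sigma$ is the affine interpolant of the three data, $\Sigma(h) = \mu\,(1 + \alpha\,(2h-1))$; integrating $V'' = \Sigma$ twice and imposing $V(0) = V(1) = 0$ fixes the two constants of integration uniquely. Collecting terms and factoring out $h(h-1)$ (which must divide $V$ because of the boundary conditions) yields the announced expression; this is routine, the only care being to check that the remaining linear factor $\tfrac12 - \tfrac{\alpha}{6} + \tfrac{\alpha}{3}h$ stays positive on $[0,1]$ when $|\alpha|<1$, so that $V\le 0$ as expected from convexity. Substituting into $\psi_\lambda(h) = h(h-1) - 2\lambda V(h)$ and writing $\xi = \lambda\mu$ gives $\psi_\lambda(h) = h(h-1)\big(1 - 2\xi(\tfrac12 - \tfrac{\alpha}{6} + \tfrac{\alpha}{3}h)\big)$, which is the claimed factored cubic after expansion of the inner bracket.

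The heart of the proof is the evaluation of $I_\lambda = \int_0^1 \dd h/\sqrt{\psi_\lambda(h)}$. I would first record the reflection symmetry $h\mapsto 1-h$: it sends the problem with slope $\alpha$ to the one with slope $-\alpha$ and leaves $I_\lambda$ invariant, because $\psi_\lambda(1-h)$ is exactly the $\psi$ attached to $\Sigma(1-\cdot)$. This lets me assume $\alpha\le 0$ without loss of generality, which is the convenient sign: the third root $r$ of the cubic (the zero of the linear factor) then lies to the right of $1$, and $\psi_\lambda = \tfrac{2|\alpha|\xi}{3}\,h(1-h)(r-h)$ with $r = \tfrac{3\xi+|\alpha|\xi-3}{2|\alpha|\xi} > 1$, so that $\psi_\lambda>0$ on $(0,1)$. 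The key move is then the substitution $h = T^2$, which doubles the root at the origin and turns the cubic under the square root into the symmetric quartic $(1-T^2)(r-T^2)$; using evenness to extend the range to $[-1,1]$, the integral falls exactly under relation (\ref{cas-racines-plus-grandes-que-un-ou-negatives}) with $a = 1$ and $b = \sqrt{r}$. Reading off the prefactor $\tfrac{2}{\sqrt r}$ and the parameter $m = 1/r$, then carrying the overall constant $\sqrt{3/(2|\alpha|\xi)}$ through and inserting the value of $r$, produces the stated $I_\lambda = \tfrac{2}{\sqrt{\xi(1+|\alpha|/3)-1}}\,K(m)$ with $m = \tfrac{2|\alpha|\xi}{3\xi+|\alpha|\xi-3}$.

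The step I expect to be the main obstacle is this last reduction, and more precisely the bookkeeping that makes the answer uniform in the sign of $\alpha$. For $\alpha>0$ the bare substitution $h=T^2$ instead lands on the normal form (\ref{cas-sigma-egal-un}) and produces a negative parameter together with the prefactor $\sqrt{\xi(1-\alpha/3)-1}$; one must either invoke the reflection symmetry up front (as above) or, equivalently, apply the imaginary-modulus transformation of $K$ to convert that negative-parameter expression into the positive $m\in(0,1)$ displayed in the statement. I would also flag the tacit admissibility condition $\xi>3/(3-|\alpha|)$, which guarantees $r>1$ and hence $\psi_\lambda\ge 0$, so that the formula is meaningful in the regime where the solution $\lambda$ of $I_\lambda=1$ actually lives.
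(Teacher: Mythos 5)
Your argument is correct and it reaches the stated value of $I_\lambda$; since the paper prints no proof of this proposition, there is nothing to match line by line, but your route is exactly in the spirit of the paper's toolbox: the substitution $h=T^2$ followed by the normal form (\ref{cas-racines-plus-grandes-que-un-ou-negatives}) is equivalent to the trigonometric substitution $h=r\sin^2\theta$ that the paper uses for the cubic $\psi^0$ in Propositions \ref{P1-par-morceaux-alpha-strictement-positif} and \ref{P1-par-morceaux-alpha-strictement-negatif}, and your reflection $h\mapsto 1-h$ (equivalently $\alpha\mapsto -\alpha$) is the same device as Remark \ref{u-et-un-moins-u}. Your admissibility condition $\xi\,(1-|\alpha|/3)>1$ and the equivalence, via the imaginary-modulus transformation, of the negative-parameter expression obtained for $\alpha>0$ with the displayed positive-parameter one are both worth keeping: they explain why the answer depends only on $|\alpha|$.

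One point you should not gloss over: your (correct) computation gives $\psi_\lambda(h)=h\,(h-1)\,\big(1-\xi+\tfrac{\alpha\xi}{3}-\tfrac{2\alpha\xi}{3}h\big)$, which is \emph{not} the bracket printed in the proposition, namely $\xi\,(1+\tfrac{\alpha}{3})-1-\tfrac{2\alpha\xi}{3}h$; the two differ by the sign of $\xi-1$, and the printed formula is in fact negative on $(0,1)$ (take $\alpha=0$, $\xi>1$: it gives $h\,(h-1)\,(\xi-1)<0$). The displayed expression is a typo -- it equals your $\psi_\lambda$ only after replacing $h\,(h-1)$ by $h\,(1-h)$ and $h$ by $1-h$ in the bracket -- so your sentence ``which is the claimed factored cubic after expansion'' is literally false and should instead record the correction. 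This does not affect the final formula for $I_\lambda$, which is invariant under $h\mapsto 1-h$ and which your derivation confirms, with prefactor $2/\sqrt{\xi(1+|\alpha|/3)-1}$ and $m=2|\alpha|\xi/(3\xi+|\alpha|\xi-3)\in(0,1)$.
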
 
\begin{proof}
It is easy to check from these expressions that $V''(h)=\mu[2\alpha h+1-\alpha]=\Sigma(h)$ and that $\psi_\lambda$ satisfies (\ref{psilambda}). The change of variable $h=\sin^2\theta$ for $\alpha<0$ or $h=\cos^2\theta$ for $\alpha>0$ in the integral $ \, I_\lambda$ yields the expression. \end{proof}

\bigskip \noindent $\bullet$ 
Second case:  $ \, \Sigma \in P_2 $.  

In this case, the polynomial $ \, \psi_\lambda \, $ is a polynomial of degree $4$  with real coefficients
and  $ \, \psi_\lambda \, $ is positive on the interval $ \, (0,\, 1 ) $.
We have also $ \, \psi_\lambda (0) = \psi_\lambda (1) = 0 $. 
Recall that  $ \, \psi_{\lambda}(h)=h\,(h-1) - 2\, \lambda \, V(h) \, $ with $ \, V'' \equiv \Sigma \, $ the Lagrange interpolate polynomial
such that $ \, \Sigma(0) = \sigma_0 $,  $ \, \Sigma({1\over2}) = \sigma_{1/2} \, $ and  $ \, \Sigma(1) = \sigma_1 $. All these coefficients are supposed
positive: $ \, \sigma_0 > 0 $, $ \, \sigma_{1/2}  > 0 $, $ \, \sigma_{1} > 0 $.
We introduce appropriate parameters $\, \mu $, $ \, \alpha \, $ and $ \, \delta \, $ such that  $\,  \sigma_0 = \mu \, (1 - \alpha) $,
$ \, \sigma_{1/2} =  \mu \, (1 + \delta)  \, $ and $\,  \sigma_1 = \mu \, (1 + \alpha) $.
In this sub-section,  we exclude the   case of a linear interpolation, {\it id est} $\, \delta \not= 0 $,
  in coherence with the hypothesis that the degree of the polynomial $ \, \psi_\lambda \, $
is exactly equal to 4. 

The first part of this case is the transformation of this polynomial of degree 4:

\begin{lemma} \label{P2-lemme-1}
  With the above  notations and properties, the parameters $\, \mu $, $ \, \alpha \, $ and $ \, \delta \, $ are given by
  $ \, \mu = {1\over2} \, (\sigma_0 +  \sigma_1) $, $\, \alpha = 1 - {{\sigma_0}\over{\mu}} \, $ and 
   $ \, \delta = - {{ \sigma_0 - 2 \, \sigma_{1/2} +  \sigma_1}\over{\sigma_0 +  \sigma_1}}  $.
  They satisfy the inequalities $\, \mu > 0 $, $\, | \alpha | < 1 \, $ and  $ \, \delta > -1 $. If we set
  $ \, \gamma \equiv 1 - {1\over3} \,  | \alpha | + {2\over3} \, \delta $, we have the inequality $ \, \gamma > 0 $. 
\end{lemma}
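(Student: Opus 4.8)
The plan is to invert the linear change of variables $(\sigma_0,\sigma_{1/2},\sigma_1)\mapsto(\mu,\alpha,\delta)$ defined by the three relations $\sigma_0=\mu\,(1-\alpha)$, $\sigma_{1/2}=\mu\,(1+\delta)$, $\sigma_1=\mu\,(1+\alpha)$, and then to read off the three inequalities and the positivity of $\gamma$ directly from the resulting closed-form expressions.

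First I would add the first and third relations to obtain $\sigma_0+\sigma_1=2\,\mu$, hence $\mu=\frac12\,(\sigma_0+\sigma_1)$; subtracting them gives $\sigma_1-\sigma_0=2\,\mu\,\alpha$, so $\alpha=\frac{\sigma_1-\sigma_0}{\sigma_0+\sigma_1}=1-\frac{\sigma_0}{\mu}$, which is the stated value. The middle relation then yields $\delta=\frac{\sigma_{1/2}}{\mu}-1=\frac{2\,\sigma_{1/2}-\sigma_0-\sigma_1}{\sigma_0+\sigma_1}=-\frac{\sigma_0-2\,\sigma_{1/2}+\sigma_1}{\sigma_0+\sigma_1}$. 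Each formula is elementary algebra and presents no difficulty.

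Next I would dispatch the three inequalities. The bound $\mu>0$ is immediate from $\sigma_0,\sigma_1>0$. For $|\alpha|<1$ I would combine $|\alpha|=\frac{|\sigma_1-\sigma_0|}{\sigma_0+\sigma_1}$ with the strict triangle inequality $|\sigma_1-\sigma_0|<\sigma_0+\sigma_1$, which holds precisely because both cross sections are strictly positive. For $\delta>-1$ I would compute $\delta+1=\frac{2\,\sigma_{1/2}}{\sigma_0+\sigma_1}>0$, using $\sigma_{1/2}>0$.

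The only genuinely nontrivial step, and the one I expect to be the main obstacle, is the strict positivity of $\gamma\equiv 1-\frac13\,|\alpha|+\frac23\,\delta$, on account of the absolute value. I would substitute the expressions for $\alpha$ and $\delta$ and reduce to the common denominator $3\,(\sigma_0+\sigma_1)$, obtaining $\gamma=\frac{(\sigma_0+\sigma_1)-|\sigma_1-\sigma_0|+4\,\sigma_{1/2}}{3\,(\sigma_0+\sigma_1)}$. The key observation is the elementary identity $(\sigma_0+\sigma_1)-|\sigma_1-\sigma_0|=2\,\min(\sigma_0,\sigma_1)$, which collapses the numerator to $2\,\min(\sigma_0,\sigma_1)+4\,\sigma_{1/2}$, a sum of strictly positive quantities. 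Hence $\gamma>0$, which completes the argument.
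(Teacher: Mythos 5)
Your proof is correct and follows essentially the same elementary route as the paper: both derive the formulas by inverting the linear change of variables and deduce $\mu>0$, $|\alpha|<1$, $\delta>-1$ from the strict positivity of $\sigma_0$, $\sigma_{1/2}$, $\sigma_1$. The only difference is cosmetic: for the last claim the paper simply plugs the strict bounds $|\alpha|<1$ and $\delta>-1$ into the definition to get $\gamma>1-\tfrac13-\tfrac23=0$ in one line, whereas you re-expand $\gamma$ in terms of the $\sigma$'s and use $(\sigma_0+\sigma_1)-|\sigma_1-\sigma_0|=2\min(\sigma_0,\sigma_1)$ — a valid, slightly longer version of the same estimate.
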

\begin{proof}
  The two first inequalities for $ \, \mu\, $ and $ \, \alpha \, $ are clear because $ \,  \sigma_0 > 0 \, $ and $ \, \sigma_{1} > 0 $. From
  $ \,  \sigma_{1/2}  > 0 \, $ and $ \,  \mu > 0 \, $ we deduce that $ \, \delta > -1 $.
  Then $ \, \gamma > 1 -  {1\over3} - {2\over3} = 0 $. 
\end{proof}
\begin{lemma} \label{P2-lemme-2}
  With the above  notations and properties and $\xi$ given by (\ref{xi}); then $\, \xi > 0 $. Introduce the two roots $ \, p \, $ and $ \, g \, $ distinct from $0,1$  of the polynomial  $\, \psi_\lambda  $. There exists a constant $a_0$ such that $ \,  \psi_\lambda (h) = a_0 \, h \, (h-1) \, (h-p) \, (h-g) $. One has  $\, a_0 = {2\over3} \, \xi \, \delta $, $ \, a_0 \,(p+g) =  {2\over3} \, \xi \, (\delta + \alpha) \, $
  and $ \,a_0 \, pg = 1 - \xi +  {1\over3}\, \alpha \, \xi -  {2\over3} \, \delta \, \xi $.   
\end{lemma}
\begin{proof}
 The equality $\psi''_{\lambda}(h)=2(1-\lambda \Sigma(h))$ yields, along with $\Sigma(h)= 2\mu(1-\alpha)(h-\frac12)(h-1)+4\mu(1+\delta)h(1-h)+2\mu(1+\alpha)h(h-\frac12)$, the values of $a_0, p+g, pg$ of Lemma \ref{P2-lemme-2}, using elementary algebra. We have used the Sagemath \cite{sage} package, an open source mathematical software system.  In particular, the term of degree 2 of polynomial $\Sigma(h)$ is $2\mu (1-\alpha)-4\mu(1+\delta)+2\mu(1+\alpha)= -4\mu \delta$ which yields the value of $a_0$.
  The inequality $ \, \xi > 0 \, $ is a consequence of the hypothesis $ \, \lambda > 0 \, $ and of the property $ \, \mu > 0 \, $
established in Lemma~\ref{P2-lemme-1}. An abridged version of the following analytical results have been presented in the SYNASC21 conference \cite{DL21}.
\end{proof}

\begin{lemma} \label{P2-lemme-3}
  With the above  notations and properties, if the integral $ \, I_{\lambda} \equiv \int_0^{\,1}\!\!  {{\dd h}\over{\sqrt{\psi_\lambda (h)}}} \, $
  is finite, the roots $ \, p \, $ and $ \, g \, $ cannot be equal to 0 or 1. 
\end{lemma}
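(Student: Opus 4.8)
The plan is to argue by contradiction, exploiting the fact that $0$ and $1$ are \emph{already} roots of $\psi_\lambda$ coming from the factor $h\,(h-1)$. If either $p$ or $g$ were to coincide with an endpoint, that endpoint would become a root of multiplicity at least two, and the integrand $1/\sqrt{\psi_\lambda}$ would then acquire a singularity of non-integrable type there, forcing $I$ to diverge. The whole argument is thus a local integrability analysis at the two endpoints.

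First I would recall from Lemma~\ref{P2-lemme-2} the factorisation $ \, \psi_\lambda(h) = a_0 \, h \, (h-1) \, (h-p) \, (h-g) \, $ and suppose, for contradiction, that one of the four equalities $p=0$, $g=0$, $p=1$, $g=1$ holds. By the symmetric roles of $p$ and $g$, and of the endpoints $0$ and $1$, it suffices to treat a single case, say $p=0$; the three remaining ones are identical after relabelling. Under $p=0$ the factor $(h-p)=h$ merges with the existing factor $h$, so that $ \, \psi_\lambda(h) = a_0 \, h^2 \, (h-1) \, (h-g) \, $ and $h=0$ is a root of multiplicity at least two. Writing $ \, \psi_\lambda(h) = h^2 \, q(h) \, $ with $ \, q(h) = a_0 \, (h-1) \, (h-g) \, $ continuous and $ \, q(0) = a_0 \, g $, the hypothesis that $I$ is a \emph{real} positive number forces $ \, \psi_\lambda(h) > 0 \, $ on $ \, (0,1) $, whence $ \, q(0) = a_0 \, g > 0 \, $ (were it negative the integrand would be purely imaginary on a right neighbourhood of $0$, and $I$ could not be a positive real).

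Consequently $ \, \sqrt{\psi_\lambda(h)} = h \, \sqrt{q(h)} \sim \sqrt{a_0 \, g}\; h \, $ as $ \, h \to 0^+ $, so that $ \, 1/\sqrt{\psi_\lambda(h)} \sim (a_0 \, g)^{-1/2} \, h^{-1} $. Comparing with the divergent integral $ \, \int_0^{1/2} \frac{\dd h}{h} = +\infty \, $ shows that $ \, \int_0^{1/2} \frac{\dd h}{\sqrt{\psi_\lambda(h)}} = +\infty $, contradicting the assumption that $I$ is a finite positive real number. The same comparison disposes of the degenerate coalescences such as $p=g=0$: there $0$ is a root of some multiplicity $ \, m \geq 2 $, positivity forces the leading coefficient of $ \, \psi_\lambda(h) \sim c \, h^m \, $ to be positive, and $ \, 1/\sqrt{\psi_\lambda(h)} \sim c^{-1/2} \, h^{-m/2} \, $ is non-integrable since $ \, m/2 \geq 1 $. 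The only point requiring a little care, and the main (mild) obstacle, is precisely the dichotomy between a simple root of $\psi_\lambda$, at which the singularity is of the integrable type $ \, h^{-1/2} \, $ (this is exactly what keeps $I$ finite at the genuine endpoints $0$ and $1$), and a double or higher root, at which it degenerates to the non-integrable type $ \, h^{-1} $; it is this contrast that turns the convergence of $I$ into an honest constraint $ \, p,g \notin \{0,1\} \, $ rather than an algebraic coincidence.
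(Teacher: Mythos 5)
Your argument is correct and is exactly the paper's proof, merely written out in more detail: the paper likewise observes that $p$ or $g$ coinciding with $0$ or $1$ creates a double root of $\psi_\lambda$, at which $1/\sqrt{\psi_\lambda}$ behaves like $h^{-1}$ and is therefore not integrable, contradicting the finiteness of $I$. Your explicit local expansion and the remark on higher-order coalescences are a welcome elaboration but add no new idea.
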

\begin{proof}
  If either  $ \, p \, $ or  $ \, g \, $ is equal to 0 or 1, the polynomial $ \, \psi_\lambda (h) \, $ has a double root and the function
  $ \, (0,\,1) \ni h \longmapsto  {{1}\over{\sqrt{\psi_\lambda (h)}}} \, $ is not integrable on the interval $ \, (0,\, 1 ) $.   
\end{proof}

For further purposes, set
\begin{equation}\label{discriminant}
\Delta \equiv (p+g)^2 - 4 \,pg= {{1}\over{a_0^2}} \, \xi \, \big[ (5 \, \delta^2 + \alpha^2 + 6 \, \delta)\, \xi - 6 \, \delta \big] \,, 
\end {equation}

As the polynomial $\psi_{\lambda}$ is exactly of degree 4 and two real roots, the two other roots are either real or complex conjugate, denoted by $p$ and $g$. The main argument in the sequel is expressed in the following transform of the function $\psi_{\lambda}$:
 \begin{proposition}
\label{homographic-change}
If $p+g=1$, then $\psi_{\lambda}(h)= a_0 (v^2-\frac14)(v^2-\frac{\Delta}{4})$ with $v=h-\frac12$ and $-\frac12\leq v\leq \frac12$.

If $p+g\not=1$, there exists a homographic change of variable $T(h)= \frac{h-d}{h-c}$, and $a, b$ such that the roots $p, g, 0, 1$ are transformed into $\pm a, \pm b$ or $\pm a, \pm i b$, with $0<a<1<b$ and
\begin{enumerate}
\item when $p<0<1<g$, $\psi_{\lambda}(h)=a_0(h-c)^4\frac{(T^2-a^2)(b^2-T^2)}{(1-a^2)(b^2-1)}$, $-b\leq T\leq -a$ for $0\leq h\leq 1$,

\item when $p, g$ are of the same sign, $\psi_{\lambda}(h)= \vert a_0 \vert  (h-c)^4\frac{(a^2-T^2)(b^2-T^2)}{(1-a^2)(b^2-1)}$, with $-a\leq T\leq a$ for $0\leq h\leq 1$,
\item when $p$ and $g$ are complex conjugates, $\psi_{\lambda}(h)= \vert a_0\vert (h-c)^4\frac{(a^2-T^2)(b^2+T^2)}{(1-a^2)(b^2+1)}$,  in which case $T(p)= ib$, $T(g)=-ib$, $-a\leq T\leq a$ for $0\leq h\leq 1$.
\end{enumerate}
The expressions of $a, b, c, d$ are obtained in Lemmas \ref{P2-proposition-1}, \ref{P2-proposition-4}, \ref{P2-proposition-3} that follow.

\end{proposition}
\begin{proof} of Proposition \ref{homographic-change}.
The proof of the first line comes from $h(h-1)= v^2-\frac14$ and $h^2-(p+g)h+pg= h^2-h+pq=v^2-\frac{\Delta}{4}$.  

If $p+g\not=1$ the homographic transform needs to be derived. The proof of Proposition \ref{homographic-change} follows from the following Lemmas and their proofs, where we solve for $a, b, c, d$, $c$ and $d$ being the parameters of the homographic function and $0<a<1<b$ are given through the homographic relations (\ref{hom-1}), (\ref{hom-2}), (\ref{hom-3}), (\ref{hom-4}). 


When $\Sigma$ is in $P_2$, the results of Proposition \ref{homographic-change} imply, using Lemma \ref{integrales-elliptiques}, the result of the first item of Theorem \ref{theorem-analytical}, using extensively the identity, for all $(h, {\tilde h})$ different from $c$:
\begin{equation}\label{homographic}\frac{h-{\tilde h}}{h-c}=\frac{T({\tilde h})-T(h)}{T({\tilde h})-1}.\end{equation}\\
For $p+g\not=1$, the relation $\frac{dT}{dh}= \frac{d-c}{(h-c)^2}$ yields
\begin{enumerate}
\item for $p<0<1<g$, $$\begin{array}{ll}\int_0^1\frac{dh}{\sqrt{(h-c)^4((T(h))^2-a^2)(b^2-(T(h))^2)}}&=\frac{2}{d-c}\int_{-b}^{-a}\frac{dT}{\sqrt{(T^2-a^2)(b^2-T^2)}}\\
&=\frac{1}{(d-c)b}K(1-\frac{a^2}{b^2}),\end{array}$$
\item for $p<g<0<1$ or $0<1<p<g$, $$\begin{array}{ll}\int_0^1\frac{dh}{\sqrt{(h-c)^4(a^2-(T(h))^2)(b^2-(T(h))^2)}}&=\frac{1}{d-c}\int_{-a}^{-a}\frac{dT}{\sqrt{(a^2-T^2)(b^2-T^2)}}\\&=\frac{1}{2(d-c)b}K(1-\frac{a^2}{b^2}),\end{array}$$
\item for $p$ and $g$ complex conjugates, $$\begin{array}{ll}\int_0^1\frac{dh}{\sqrt{(h-c)^4(a^2-(T(h))^2)(b^2+(T(h))^2)}}&=\frac{1}{d-c}\int_{-a}^{-a}\frac{dT}{\sqrt{(a^2-T^2)(b^2+T^2)}}\\
&=\frac{2}{(d-c)\vert b\vert }K(-\frac{a^2}{b^2})= {{2 K(\frac{a^2}{b^2}\frac{a^2}{a^2+b^2})}\over{|b|(d-c)\sqrt{b^2+a^2}}} \,,\end{array}$$
\end{enumerate}
and for $p+g=1$, $$\int_{-\frac12}^{\frac12}\frac{dv}{\sqrt{(v^2-\frac14)(v^2-\frac{\Delta}{4})}}=\frac{4}{\sqrt{\vert \Delta\vert}}K(\frac{1}{\Delta}),$$ using either (\ref{cas-racines-plus-grandes-que-un-ou-negatives}) of Lemma \ref{integrales-elliptiques} when $\Delta>0$ or (\ref{cas-sigma-egal-un}) of Lemma \ref{integrales-elliptiques} when $\Delta<0$.

This yields, using the expression of $d-c$ in each case:\\
\noindent

 $I_{\lambda}=2 \,  \frac{(b-a)\sqrt{(1-a^2)\, (1+b^2)}}{\sqrt{|a_0|} \, (a+1)\, (b+1)} \, K(1-{{a^2}\over{b^2}}) $ when $p<0<1<g$ (see (\ref{formula abAB}) for $a,b$),
\noindent

 $I_{\lambda}=2 \,  {{2a\sqrt{(1-a^2)\, (1+b^2)}}\over{\sqrt{|a_0|}\, |b| \, (1-a^2)}} \, K({{a^2}\over{b^2}}) $ when $p<g<0<1$ or $0<1<p<g$, $b$ solution of (\ref{eq:b}) and $ \, a = b\, (p-g)+p+g-1 \, $,
\noindent

$I_{\lambda}=2 \,  {{(2c-1)\sqrt{(1-a^2)\, (1+b^2)}}\over{\sqrt{|a_0|}\, |b \, (1+c)|}} \, K(- {{a^2}\over{b^2}} ) $ when $p$ and $g$ are complex conjugate, $c$ is solution of (\ref{eq:c}) and $a=\frac{1}{1-2c}, b=\frac{\sqrt{-\Delta}}{2c-p-g}$,
\noindent

$I_{\lambda}= \frac{4}{\vert a_0\vert \sqrt{\vert \Delta\vert}}K(\frac{1}{\Delta})$ when $p+g=1$.

 \begin{lemma} \label{P2-proposition-1}
  We keep active the above notations. 

  (i) If the discrete positive family $ \, (\sigma_0 ,\,  \sigma_{1/2} ,\, \sigma_1) \, $ is the trace of a concave function,
  {\it id est} if $ \,\,  \sigma_{1/2} - \sigma_0 > \sigma_1 - \sigma_{1/2} \,$, we have $ \, \delta > 0 $, $ \, a_0 > 0 \, $ and $ \, \gamma \, \xi > 1 $.
  The two roots of the function $ \, \psi_\lambda \, $ different from $0,1$
  are real  with opposite signs, that is $ \, p < 0 < 1 < g $ with $ \, p = {1\over2} \, (p+g - \sqrt{\Delta}) \, $ and  $ \, g = {1\over2} \, (p+g + \sqrt{\Delta}) \, $.

  (ii) Conversely, if  the function $ \, \psi_\lambda \, $   has real zeros  with opposite signs, then

\centerline {$
  \sigma_{1/2} - \sigma_0 > \sigma_1 - \sigma_{1/2} $.}
  
 In this case (which is the first item of Proposition \ref{homographic-change}), we deduce the expression 
 
 \centerline{ $\psi_{\lambda}(h)= \vert a_0\vert (h-c)^4\frac{(T^2-a^2)(b^2-T^2)}{(1-a^2)(b^2-1)}$, positive for $-b\leq T\leq -a$}
 
  where
   \begin{equation}\label{formula abAB} \, a =  {{\sqrt{{g\, (g-p)}\over1-p}-1}\over{\sqrt{{g\, (g-p)}\over1-p}+1}} \, ,\,  b =  {{\sqrt{{(1-p)\, (g-p)}\over{g}}+1}\over{\sqrt{{(1-p)\, (g-p)}\over{g}}-1}}. \end{equation}

\end{lemma}
\begin{proof}
  (i) Recall that $ \, a_0 > 0 \, $ because $ \, \delta > 0 \, $ in this case.\\
  Set $ \, f(h) \equiv (h - p) \, (h-g) $. Then
  $ \, \psi_\lambda (h) = -a_0 \, h \, (1-h) \, f(h) \, $ and the function $ \, f \, $ must be negative on $ \, (0,\, 1 ) \, $ because $ \, \psi_\lambda \, $
  is positive on this interval.\\ But
  $     a_0 \, f(0) = a_0 \, pg = 1 - \xi \,  \big( 1 -  {1\over3}\, \alpha +  {2\over3} \, \delta \big)  \,,\,\, 
     a_0 \, f(1) =  1 - \xi \,  \big( 1 +  {1\over3}\, \alpha +  {2\over3} \, \delta \big)  $. 
Then 
  $ \, \xi \, \inf \big( 1 -  {1\over3}\, \alpha +  {2\over3} \, \delta ,\, 1 +  {1\over3}\, \alpha +  {2\over3} \, \delta \big) > 1 \, $ and 
  in other words  $ \, \gamma \, \xi > 1 $. The polynomial $ \, f \, $ is strictly negative on the interval  $ \, (0,\, 1 ) \, $
  and the inequality $ \,  p < 0 < 1 < g \, $ is just a notation that distinguish  $ \, p \, $ as the ``petite'' root and
  $ \, g \, $ as the ``grande'' root.

  (ii) Conversely, if the polynomial $ \, f \, $ is strictly negative on the interval  $ \, (0,\, 1 ) $, we must have $ \, a_0 > 0 \, $ because  the function
    $ \, \psi_\lambda (h) \, $ is positive on this interval. Then $ \, \delta > 0 \, $ and  $ \,\,  \sigma_{1/2} - \sigma_0 > \sigma_1 - \sigma_{1/2} \,$. 

Obtaining the homographic transform follows from the book \cite{AS}, in paragraph 17.8, Example 1 (where they obtain the homographic mapping when writing a polynomial of degree 4 with 4 real roots as a product). Our presentation here is original, because one enforces the homographic mapping and identify the focal points instead of deriving it. 
Lemma \ref{P2-proposition-1} is the case $p<0<1<g$ and the homographic transform satisfies
  \begin{equation}\label{hom-1}\, T(p) = b ,\, T(0) = -b , \, T(1) = -a \, ,  \, T(g) = a ,\end{equation}
   as presented on Figure \ref{fig-p-negatif-g-positif}, and (\ref{homographic}) applied in $\psi_{\lambda}(h)= a_0(h-0)(h-1)(h-p)(h-g)$ yields 
   

\centerline {$ 
  \psi_\lambda (h) = a_0 \, (h-c)^4 \, {{((T(h))^2-a^2)\, (b^2-(T(h))^2)}\over{(1-a^2)\, (b^2-1)}} \, . $}

\noindent
If we enforce the condition $ \, 0 < a < 1 < b $,
we still have $ \, \psi_\lambda (h) > 0 \, $ for $\, 0 < h < 1 \, $ and the transformed integral has to be computed
on the interval $ \, (-b ,\, -a ) $. 

We have now to determine the four parameters $ \, a $, $ \, b $, $ \, c \, $ and $ \, d \, $ as a function of the data $ \, p \, $ and $ \, g $.
We have the constitutive relations, from $c-d=(h-c)(T(h)-1)$, that we apply to $h=p, 0, 1, g$:

\centerline {$
  c - d = (b-1) \, (p-c) = (-b-1) \, (0-c) = (-a-1)(1-c) = (a-1)\, (g - c) \, .$}

\noindent 
From the third equality, we obtain $ \, c = -{{a+1}\over{b-a}} < 0 $. We report this value inside the second and fourth equalities to obtain a system
of two equations for the parameters $ \,a \, $ and $ \, b $: $$\left\{\begin{array}{l}\, p \, (b-a) + a + 1 = - {{(a+1)\, (b+1)}\over{b-1}} \, \\
 \,  g \, (b-a) + a + 1 = - {{(a+1)\, (b+1)}\over{a-1}}.\end{array}\right.$$ Taking the difference, we have
$ \, - {{a+1}\over{a-1}} \, {{b+1}\over{b-1}} = g-p > 0 $. Then $ \, A \equiv  {{a+1}\over{a-1}} \, $ is negative, 
$\,  B \equiv  {{b+1}\over{b-1}} \, $ is positive and $ \, - A \, B = g-p $. 
Using $a+1=\frac{2A}{A-1}$ and $b+1=\frac{2B}{B-1}$  which yields $\frac{(a+1)(b+1)}{a-1}= \frac{2AB}{B-1}= -2\frac{g-p}{B-1}$, and $b-a=\frac{2(A-B)}{(A-1)(B-1)}$, we insert the transformed parameters $ \, A \, $ and $ \, B \, $ into the equation $ \,  g \, (b-a) + a + 1 = - {{(a+1)\, (b+1)}\over{a-1}} \, $ to obtain $\, -A \, (1-p) = g \, B $. Since $ \, A<0 \, $ and $ \, B > 0 $, then  
 \begin{equation}   \label{formula:AB}    A = -\sqrt{{g\, (g-p)}\over1-p} \,,\,\, B = \sqrt{{(1-p)\, (g-p)}\over{g}},   a = {{A+1}\over{A-1}} \,,\,\, b =  {{B+1}\over{B-1}}.  \end{equation}
Expression (\ref{formula abAB}) follows by replacing $A$ and $B$ in $a, b$ in (\ref{formula:AB}).

 The second constitutive equation gives the value  $ \, d  = - c \, b $. From $(a+1)(c-1)= (b+1)c$ one deduces $c=-\frac{a+1}{b-a}$ and using $c-d=(b+1)c$ one deduces
$ \, d - c = {{(a+1)\, (b+1)}\over{b-a}} > 0 $, used in the final expression of $I_{\lambda}$.   This gives also the expressions $c=\frac{a+1}{a-b}, d= -\frac{b(a+1)}{a-b},
$ which can be expressed in terms of $p$ and $g$ through (\ref{formula abAB}). 
\end{proof}
\begin{figure}
    \centerline{\includegraphics[width = .8 \textwidth] {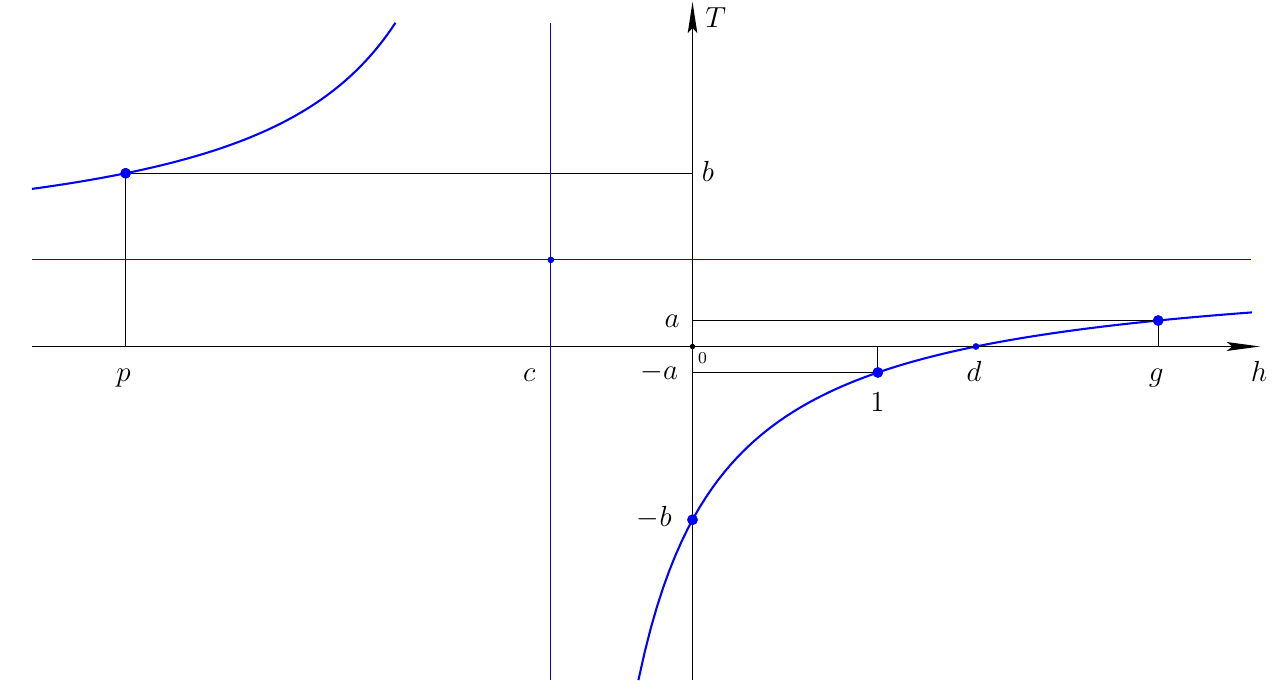}}
    \caption{Homographic transformation for the computation of the integral $\, \int_0^{\,1} \!\! {{\dd h}\over{\sqrt{\psi_\lambda (h)}}} $;  case $\, p < 0 < 1 < g $.}
    \label{fig-p-negatif-g-positif}
\end{figure}
If the sequence $ \, \sigma_0 ,\, \sigma_{1/2} ,\, \sigma_1 \, $ is convex, {\it id est} if
$ \,\,  \sigma_{1/2} - \sigma_0 < \sigma_1 - \sigma_{1/2} \,$, we have $ \, \delta < 0 $, $ \, a_0 < 0 \, $ and we are in the two other cases (thanks to the equivalence of Lemma \ref{P2-proposition-1}), that is two real roots with the same sign or two complex conjugate roots, given by
$\, p = {1\over2} \, (p+g -  \, \sqrt{\Delta}) \, $ and  $ \, g = {1\over2} \, (p+g+\, \sqrt{\Delta}) \, $ when $\, \Delta \, $ introduced in (\ref{discriminant})  is positive or $\, p = {1\over2} \, (p+g + i \, \sqrt{-\Delta}) \, $ and  $ \, g = {1\over2} \, (p+g- i \, \sqrt{-\Delta}) \, $ when $ \, \Delta < 0 \, $.

\begin{lemma} \label{P2-proposition-4}
  With the notations of Lemma \ref{P2-lemme-1}, in the case $\, \Delta > 0 \, $ with  two real roots with the same sign (that is necessarily $p,g>1$ or $p,g<0$),  one has
  
   $ \, \psi_\lambda (h) = |a_0| \, (h-c)^4 \, {{(a^2 - T^2)\, (b^2-T^2)}\over{(1-a^2)\,(b^2-1)}} \, $ positive for $ \, T^2 < a^2 < b^2 $.
   
For $p+g>2$, $b$ is given by (\ref{b-cas >2}), for $p+g<0$ $b$ is given by by (\ref{b-cas-p+g<0})  and $ \, a = b\, (p-g)+p+g-1 \, $ in both cases.  

\end{lemma}

\begin{figure}
    \centerline{\includegraphics[width = .8 \textwidth] {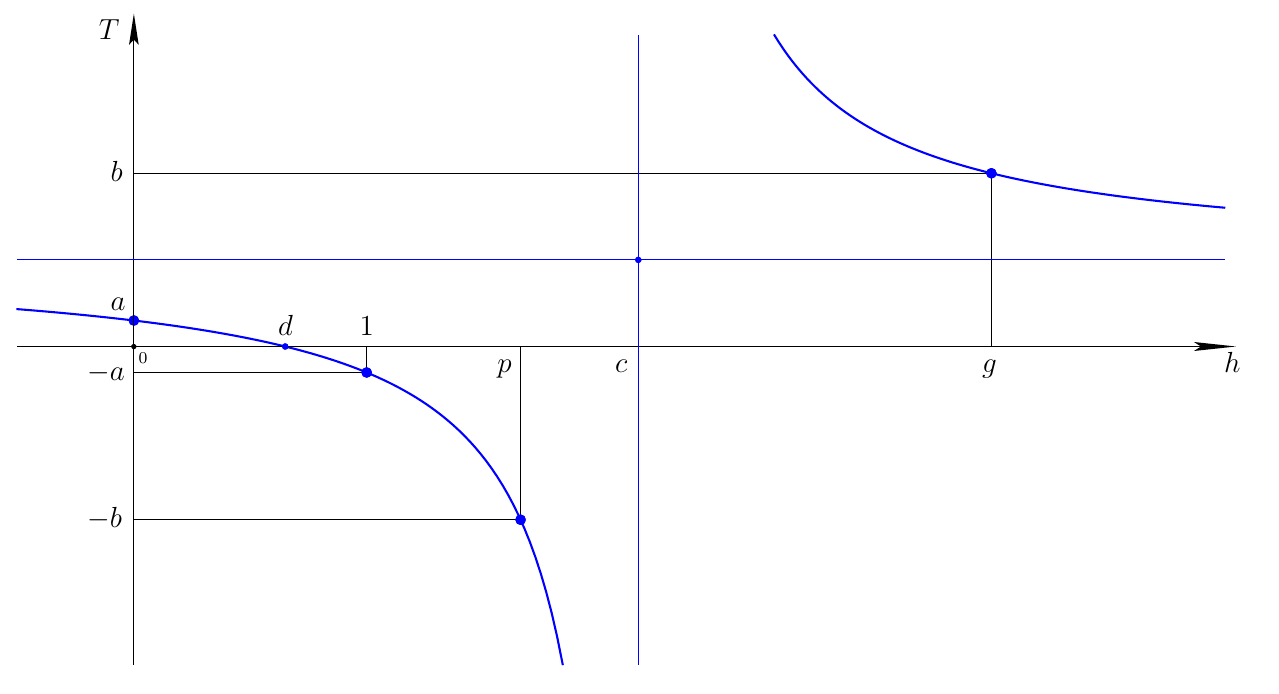}}
    \caption{Homographic transformation for the computation of the integral $\, \int_0^{\,1} \!\! {{\dd h}\over{\sqrt{\psi_\lambda (h)}}} $;  case $\,  0 < 1 < p < g $.}
    \label{fig-p-et-g-positifs}
\end{figure}

\begin{proof}
  (i) In the case $ \, p+g > 2 $, the homographic function, represented in  Figure \ref{fig-p-et-g-positifs}, satisfies
  
\begin{equation}
\label{hom-2}
    T(0) = a ,\, T(1) = -a ,\, T(p) = -b \,, \, T(g) = b \, . \end{equation}

  in order to enforce   the inequalities $ \,  0 < a < 1 < b \, $ and $ \, 0 < d < 1 < p < c < g $.
  
   \noindent

Identity (\ref{homographic}) yields the expression of $\psi_{\lambda}$ of Lemma \ref{P2-proposition-4}, positive for $ \, T^2 < a^2 < b^2 $.
  
The explicitation of the coefficients $ \, a \, , \, b \,, \, c \,,\, d \, $ is conducted as previously. We first have
$\left\{\begin{array}{l}c-d=(a-1)\, (0-c) \\
c-d= (-a-1)\, (1-c)\\
c-d=(-b-1)\,(p-c)\\
c-d=\,(b-1)\,(g-c) \end{array}\right.$

 The two first relations yield $ \, d = a \, c $, $ \, c =  {{a+1}\over{2 \,a}} $
  \noindent 
and we get after some algebra conducted with the help of formal calculus (the Sage package mentioned before), 
 $ \, a = b\, (p-g)+p+g-1 \, $ (using $d=ac$ and $c=\frac{a+1}{2a}$ in the third and fourth relations and taking the sum of the two resulting relations) and
the parameter $ \, b \, $ is a root of the second degree polynomial obtained by considering the last relation and replacing $a$ in terms of $b, p, g$ in $c$ then in $d$ to get an equation for $b$

 \begin{equation}\label{eq:b}{\check f}(b) \equiv (g-p)\,(g+p-1)\, (b^2+1) - 2 \, (p^2+g^2-p-g)\, b \end{equation}
which roots are of product 1. The reduced associated discriminant 

\noindent \centerline  {$
\Delta' = 4 \, p \, g \, (p-1)\, (g-1)  $} 

\noindent  is positive and we have just to compare the roots
with the value $ \, 1 \, $ because we want to enforce $ \, b > 1 $. But $ \,  {\check f}(1) = 4 \, p \, (1-p) < 0 \,$ and ${\check f}$ is convex, hence  $\ 1 \, $ is between the two roots, which yields

\begin{equation}
  b = {{1}\over{(g-p)\, (g+p-1)}} \big( p \,(p-1) +  g \,(g-1) + \sqrt{\Delta'} \big) > 1 \label{b-cas >2}\end{equation}
  
\noindent
and the first part of the proof is completed. 

(ii) In the case $ \,p+g < 0 $, we follow the same method.
The homographic function $ \, T(h) = {{h-d}\over{h-c}} \, $ is represented in  Figure \ref{fig-p-et-g-negatifs} and the parameters
satisfy the conditions $ \,  0 < a < 1 < b \, $ and $ \, p < c < g < 0 < d < 1 $.
We impose the following permutation between the roots:

\begin{equation} \label{hom-3}
  T(p) = b ,\, T(g) = -b ,\, T(0) = -a \, , \, T(1) = a \, .\end{equation}

 \noindent 
Identity (\ref{homographic}) yields the expression of $\psi_{\lambda}$ of Lemma \ref{P2-proposition-4}, positive for $ \, T^2 < a^2 < b^2 $. 
 The explicitation of the coefficients $ \, a $, $ \, b $, $ \, c \, $ and $ \, d \, $ comes from the conditions $ \, T(p) = b $, $\, T(g) = -b $, $\, T(0) = -a \, $ and $ \, T(1) = a $.
We deduce (using the same Sage package as mentioned there) that $ \, d = - a \, c $, $\, c =  -{{1-a}\over{2 \,a}} $, $ \, a =  b \, (p-g) + 1-p-g \, $ and the coefficient $ \, b \, $ is solution of the same equation ${\check f}(b)=0$.

 \noindent
 The two roots of this equation are positive,  we have $ \,  {\check f}(1) = 4 \, g \, (1-g) > 0 \, $ and ${\check f}$ is concave hence the root greater that 1 is

\begin{equation} \, b =  {{1}\over{(g-p)\, (1-p-g)}} \big( p \,(p-1) +  g \,(g-1) + \sqrt{\Delta'} \big) \, .\label{b-cas-p+g<0}\end{equation}

  \noindent
  The proof of the Lemma is completed. 
\end{proof}

\begin{figure}
    \centerline{\includegraphics[width = .8 \textwidth] {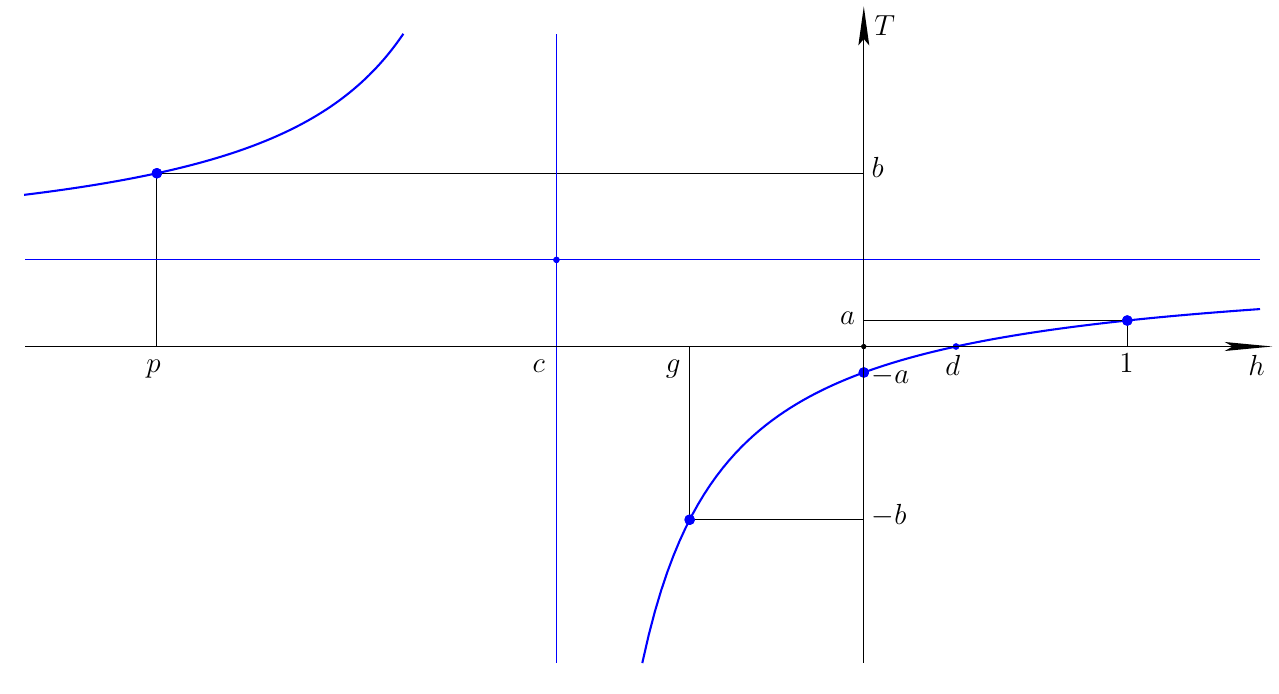}}
    \caption{Homographic transformation for the computation of the integral $\, \int_0^{\,1} \!\! {{\dd h}\over{\sqrt{\psi_\lambda (h)}}} $;  case $\,  p < g < 0 < 1 $.}
    \label{fig-p-et-g-negatifs}
\end{figure}

Treat finally the case of two complex conjugate roots.

\begin{lemma} \label{P2-proposition-3}
  With the notations of Lemma \ref{P2-lemme-1}, in the case $\, \Delta < 0 \, $ and $p+g\not=1$ with two conjugate roots, the expression of the homographic change of variable is split in two cases:

  (i) if $ \, p+g < 1 $, the constants $a, b, c, d$ are given by
  
     \centerline{ $c = - \frac{\sqrt{\widetilde{\Delta}} + {1\over4} \, ((p+g)^2 - \Delta)}{1-p-g} < 0 \,, \\ 
      a = {{1}\over{1-2\,   c}} \in (0,\, 1) \,,\,\, b =  {{\sqrt{-\Delta}}\over{2\, c - p-g}} \,,\,\,
      d = - a \,  c > c  \,, $}
      
      where  $\widetilde{\Delta} = {1\over16} \, ((p+g)^2 - \Delta) \, \big( (p+g-2)^2 - \Delta \big) > 0$.

  (ii) if $ \, p+g > 1 $, the relations of case (i) hold, except that we have now
  $ \, c > 0 $, $\, a < 0 \, $ and $ \,  d < c $. 

The expression of $\psi_{\lambda}$ reads $ \,  \psi_\lambda(h) = (-a_0) \, (h - c)^4 \, {{(a^2-(T(h))^2) \, ((T(h))^2 + b^2)}\over{(1-a^2)\, (1+b^2)}} \, , -a_0=|a_0|$ and the integral after the homographic change of variable is on $-a<T<a$.
\end{lemma}
\begin{proof}
The relations for the homographic change of variable are

\begin{equation} \label{hom-4}
T(0) = -a \, ,T(1) = a \,,  \, T \big( {{p+g \pm i \, \sqrt{-\Delta}}\over{2}} \big) = \mp i \, b\end{equation}

  \noindent 
with real coefficients $\, a $, $\, b $, $\, c \, $ and $ \, d $. Identity (\ref{homographic}) yields

$$ \,  \psi_\lambda(h) = (-a_0) \, (h - c)^4 \, {{(a^2-(T(h))^2) \, ((T(h))^2 + b^2)}\over{(1-a^2)\, (1+b^2)}} \, , -a_0=|a_0|.$$ The condition $\psi_{\lambda}>0$ for $h\in (0,1)$ implies that the integral is on $T\in (-a,a)$, thus imposes $\, a^2 < 1 $.  We construct the homography $ \, h \longmapsto T(h) $,
taking into consideration the necessary condition $ \, a^2 < 1 $. 
The constitutive relations $ \, T(0) = -a $, $ \, T(1) = a \, $ and $ \, T \big( {{\sigma \pm i \, \sqrt{-\Delta}}\over{2}} \big) = \mp i \, b \, $ 
take the form

{\small $
  c - d = (-a-1) \, (0-c) = (a-1) \, (1-c) = (-i\, b -1) \, ( {{\sigma + i \, \sqrt{-\Delta}}\over{2}} -c )   = (i\, b -1) \, ( {{\sigma - i \, \sqrt{-\Delta}}\over{2}} -c ) \, .$}

\noindent
Then $ \, {{i \, b +1}\over{i \, b -1}} = {{i \, \sqrt{-\Delta} + (2 \, c - (p+g))}\over{i \, \sqrt{-\Delta} - (2 \, c - (p+g))}} $
and $ \, b = {{\sqrt{-\Delta}}\over{2\, c - (p+g)}} $. We have also $ \, {{a+1}\over{a-1}} = {{1}\over{c}} - 1 \, $ then
$ \,  a = {{1}\over{1-2\, c}} $. 

We  can write now the equation relative to the parameter $ \, c $:
$\, c \, (a+1) = (i\, b +1 ) \, ( c - {{(p+g) + i \, \sqrt{-\Delta}}\over{2}} ) $.
But $ \, (i\, b +1 ) \, ( c - {{(p+g) + i \, \sqrt{-\Delta}}\over{2}} ) = {{(2\, c-(p+g))^2 - \Delta}\over{2 \, (2\, c -(p+g))}} \, $
and $ \, a + 1 = 2\, {{1-c}\over{1 - 2\, c}} $. An equation of degree 2 for the parameter $ \, c \, $ is emerging:

\noindent
\begin{equation}\label{eq:c}
 f(c) \equiv (1-(p+g))\, c^2 + 2 \, {{(p+g)^2 - \Delta}\over{4}} \, c - {{(p+g)^2 - \Delta}\over{4}} = 0 \, . \end{equation}

\noindent
We remark that the constant term $ \, {{(p+g)^2 - \Delta}\over{4}} \, $ is $ \, p \,  \, g $, that is $\vert p\vert^2>0$. The function $f$ have thus two roots $\, c_- \, $ and $ \, c_+ \, $ solution of $\vert p\vert^2(\frac{1}{c}-1)^2= 1-(p+g)+\vert p\vert^2>0$.

We distinguish now the two cases for $p+g$:

(i) If  $ \, p+g < 1 $, we remark that $f$ is convex and that $ \, f({{(p+g)}\over{2}}) = -{1\over4} \,\Delta \, ((p+g)-1) < 0 \, $
in this case and $ \, {{p+g}\over{2}} \, $ is in $(c_-, c_+)$. We have also $ \,  f({{1}\over{2}}) = {1\over4}\, (1-(p+g)) > 0 \, $ 
and $ \, {1\over2} \, $ is not in $(c_-, c_+)$. We deduce the inequalities $ \, c_- < 0 $, $\, c_- <  {{(p+g)}\over{2}} < c_+ <  {1\over2} $, $ \, c_+ > 0 $. 
Note that the condition $a^2<1$ yields either $0<a<1$ or $-1<a<0$. Before making a choice between the two real roots $ \, c_\pm $, observe that we have
$ \, c - d = (a+1) \, c \, $ and $ \, d = - a \, c $. Moreover $ \, a = {{1}\over{1 - 2 \, c}} \, $ . If $ \,a > 0 $, the condition
$ \, a^2 < 1 \, $ take the form $ \, a - 1 < 0 $,  but $ \, a-1 = {{2\, c}\over{1 - 2 \, c}} $ and we must have $ \, c<0 $. Then $ \, d > 0 \, $ and
$ \, c < 0 < d \,$; the condition $ \, d-c>0 \, $ is satisfied. If we had chosen $-1<a<0$, $a=\frac{1}{1-2c}$ and $a+1=\frac{2(1-c)}{1-2c}$ hence $1-2c<0$ and $1-c<0$, hence $c>1$ and one would have $0<d<c$.

The reduced discriminant $\,  \widetilde{\Delta} \, $
is equal to $ \, \big( {{(p+g)^2 - \Delta}\over{4}} \big)^2 + (1-(p+g)) \, \big( {{(p+g)^2 - \Delta}\over{4}} \big) = 
{1\over16} \, ((p+g)^2 - \Delta) \, \big( ((p+g)-2)^2 - \Delta \big) \, $ and is positive. Then
$ \, c =  c_- =  {{-1}\over{1-(p+g)}} \, \big( \sqrt{\widetilde{\Delta}} + {1\over4} \, ((p+g)^2 - \Delta) \big) <0$. We have $0<a<1$

(ii) If  $ \, p+g > 1 $, the two roots $ \, c_\pm \, $ of $f(c)=0$, which is concave, have the same sign. 
 We have now   $ \,f({{\sigma}\over{2}}) = {1\over4} \,\Delta \, (1-(p+g)) > 0 \, $ 
 and $ \, {{\sigma}\over{2}} \, $ is still between the two roots.
Moreover,  $ \, f({{1}\over{2}}) = -{1\over4}\, (\sigma-1) < 0 \, $ 
and $ \, {1\over2} \, $ is outside the two roots. We deduce the inequalities
$ \, {1\over2} < c_- <  {{\sigma}\over{2}} < c_+  $. Then whatever the choice between $\, c_\pm $, $\, 2\, c-1 > 0 \, $ and $\, a < 0 $. 
The condition $ \, a^2 < 1 \, $ imposes now $\, a + 1 > 0 \, $ and $ \, a+1 = 2\, {{1-c}\over{1 - 2\, c}} \, $ is positive if and only of $ \, 1-c < 0 $. 
We observe also that $ \,  f(1) = \big(1- {{(p+g)}\over{2}}\big)^2 - {{\Delta}\over4}  > 0  $, then $ \, c_- < 1 < c_+ $. 
The choice $ \, c_+ \, $ is mandatory and we have \begin{equation}\label{valeur-c} \, c = c_+ =  {{1}\over{(p+g)-1}} \, \big( \sqrt{\widetilde{\Delta}} + {1\over4} \, ((p+g)^2 - \Delta) \big) \end{equation}  Note that $c$ has the same expression in the two cases. Lemma \ref{P2-proposition-3} is proven.
\end{proof}
This ends the proof of Proposition \ref{homographic-change}. 
\end{proof}


\bigskip \noindent $\bullet$ 
Third case:  $ \, \Sigma \, $ continuous  positive fonction, affine in  $ \, (0,\, {1\over2} ) \, $ as well as in $ \, ( {1\over2} ,\, 1 ) $.

The function $ \, \Sigma \, $ is defined by its values  $\, \Sigma(0)=\sigma_0 $, $ \, \Sigma(\frac12)=\sigma_{1/2} \, $  and $ \, \Sigma(1)=\sigma_1 $.
We introduce new parameters, still denoted by $\, \alpha \, $ and $ \, \beta \, $ to represent the data:

\centerline  {$
  \sigma_0 = \sigma_{1/2} \, (1-\alpha) \,$ and $ \, \sigma_1 = \sigma_{1/2} \, (1+\beta) \, .$ }

\noindent
Then the inequalities $ \, \alpha < 1 \, $ and $ \, \beta > -1 \, $ express
the constraints $ \, \sigma_0 > 0 \, $ and $ \,  \sigma_1 > 0 $. Moreover, $ \,  \sigma_{1/2} \, $ is positive. 

\begin{lemma} \label{expression-psi-p1-par-morceaux}
  If the continuous function $ \, \Sigma \, $ defined on $ \, (0,\, 1) \, $ by its values

  \centerline { $
    \Sigma(0)=\sigma_0 $, $ \, \Sigma(\frac12)=\sigma_{1/2} $, $ \, \Sigma(1)=\sigma_1 $} 

    \noindent is affine in each interval $ \, (0,\, {1\over2} ) \, $ and $ \, ( {1\over2} ,\, 1 ) $, the function $\, \psi_\lambda \, $
 given by (\ref{psilambda})
  \noindent admits the following expression
  
   \centerline   {$
    \psi_\lambda(h) =  \left \{ \begin {array}{l}  \psi^0(h)  \quad  {\rm if} \, h \leq {1\over2} \\
    \psi^1(h)  \qquad  {\rm if} \, h \geq {1\over2} \,, 
    \end{array} \right.  $}

   \noindent with
  
 \begin{equation} \label{psi0-psi1-fev2025}  \left \{ \begin {array}{l}
       \psi^0(h) = -{2\over3}\, \alpha \, \zeta \, h^3 + (\alpha\, \zeta - \zeta + 1) \, h^2 +
      \big( {{\beta-5\, \alpha}\over{12}}\, \xi + \xi -1 \big) \, h \\ 
    \psi^1(h) = {2\over3}\, \beta \, \zeta \, (1-h)^3 + (-\beta \, \zeta - \zeta + 1) \, (1-h)^2 +
      \big( {{5 \, \beta - \alpha}\over{12}}\, \zeta + \zeta -1 \big) \, (1-h)  \,, 
    \end{array} \right.  \end{equation}

   \noindent    with $\, \zeta \equiv \lambda \, \sigma_{1/2} $.
   We have in particular $\,  \psi_\lambda ({1\over2}) =  {{\beta-\alpha}\over{24}}\, \xi + {1\over4}\, (\xi-1) $.
   Moreover, if $\,  \psi_\lambda \, $ is positive on the interval $ \, (0,\, 1) $, 

  \centerline { $
    {{\dd \psi_\lambda}\over{\dd h}} (0) =  {{\beta-5\, \alpha}\over{12}}\, \xi + \xi -1 > 0 \,,\,\,
     {{\dd \psi_\lambda}\over{\dd h}} (1) = - \big( {{5 \, \beta - \alpha}\over{12}}\, \xi + \xi -1 \big) < 0 $.}

\end{lemma}
\begin{remark} \label{u-et-un-moins-u}
   We observe that the expression of $\, \psi^1 \, $ is obtained from the expression of $ \,  \psi^0 \, $ by making the transformations 
   $\, \alpha \longleftrightarrow (-\beta) \, $ and $ \, h  \longleftrightarrow  (1-h) $. 
\end{remark} 
  
   \begin{proof}
     The function  $ \, \Sigma \, $ admits the algebraic expression: 
  
   \centerline   {$
    \Sigma(h) =  \left \{ \begin {array}{l} (2\, \alpha \, h - \alpha +1) \, \sigma_{1/2} \,\,\,\, {\rm if} \, h \leq {1\over2} \\
      (2\, \beta \, h - \beta +1) \, \sigma_{1/2} \,\,\,\,\, {\rm if} \, h \geq {1\over2} \,. 
    \end{array} \right.  $}

   \noindent
   We integrate twice, enforce the conditions  $\,  \psi_\lambda(0) =  \psi_\lambda(1) = 0 \, $ and impose the continuity
   of $\,  \psi_\lambda \, $ and  $\,  {{\dd \psi_\lambda}\over{\dd h}}  \, $ at the specific value $ \,  h = {1\over2} $. The result follows.    
\end{proof}

   We have to compute the integral $ \, I_{\lambda} \equiv \int_0^{\,1}\!\!  {{\dd h}\over{\sqrt{\psi_\lambda (h)}}} $. We have the following calculus:

\noindent \centerline  {$
I_{\lambda} = \int_0^{\,1/2}\!\!\!  {{\dd h}\over{\sqrt{\psi^0 (h)}}} +  \int_{1/2}^{\,1} \!  {{\dd h}\over{\sqrt{\psi^1 (h)}}}
=  \int_0^{\,1/2} \! {{\dd h}\over{\sqrt{\psi^0 (h)}}} +  \int_0^{\,1/2} \!  {{\dd h}\over{\sqrt{\psi^1 (1-h)}}}   $.}
   
\noindent

\begin{figure}
  \centerline{{\includegraphics[width = .33 \textwidth] {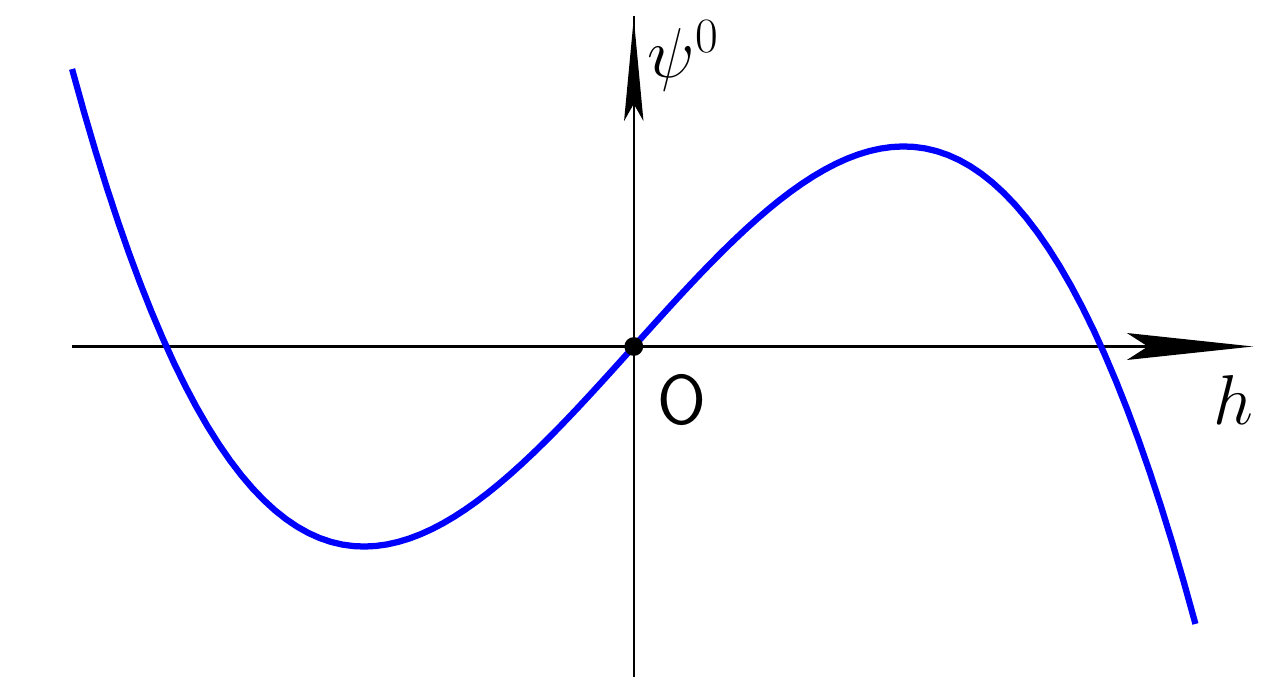}}
    {\includegraphics[width = .33 \textwidth] {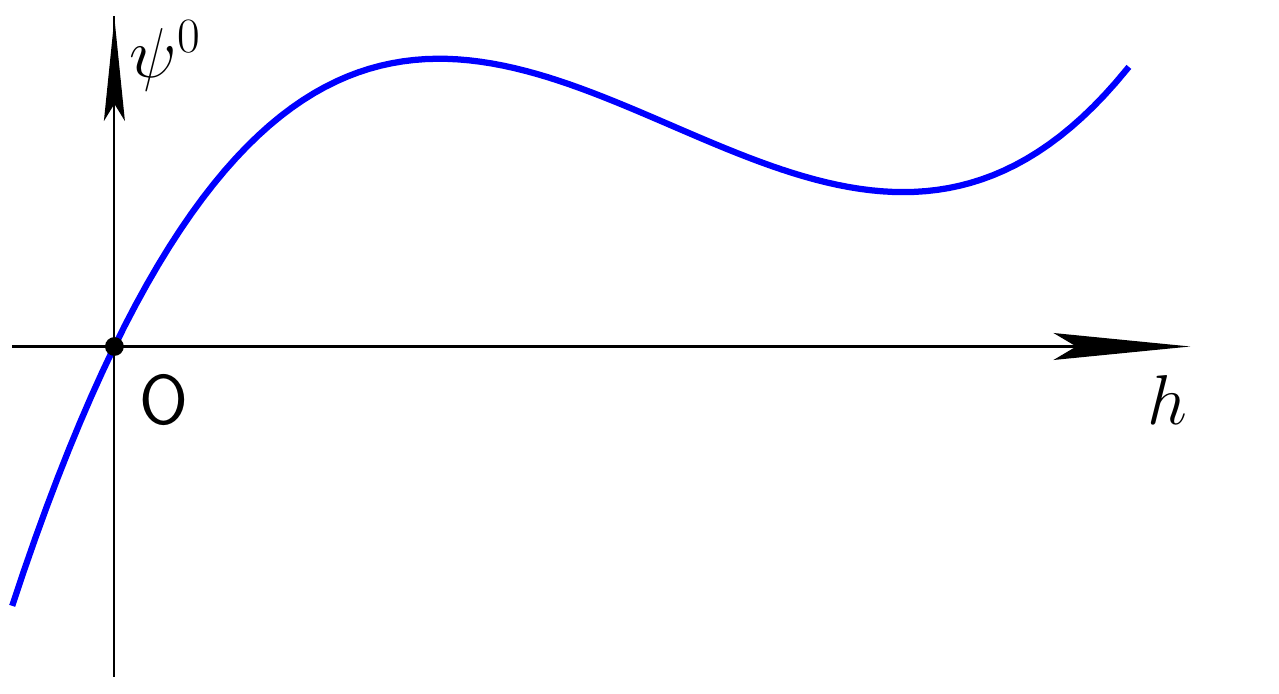}} {\includegraphics[width = .33 \textwidth] {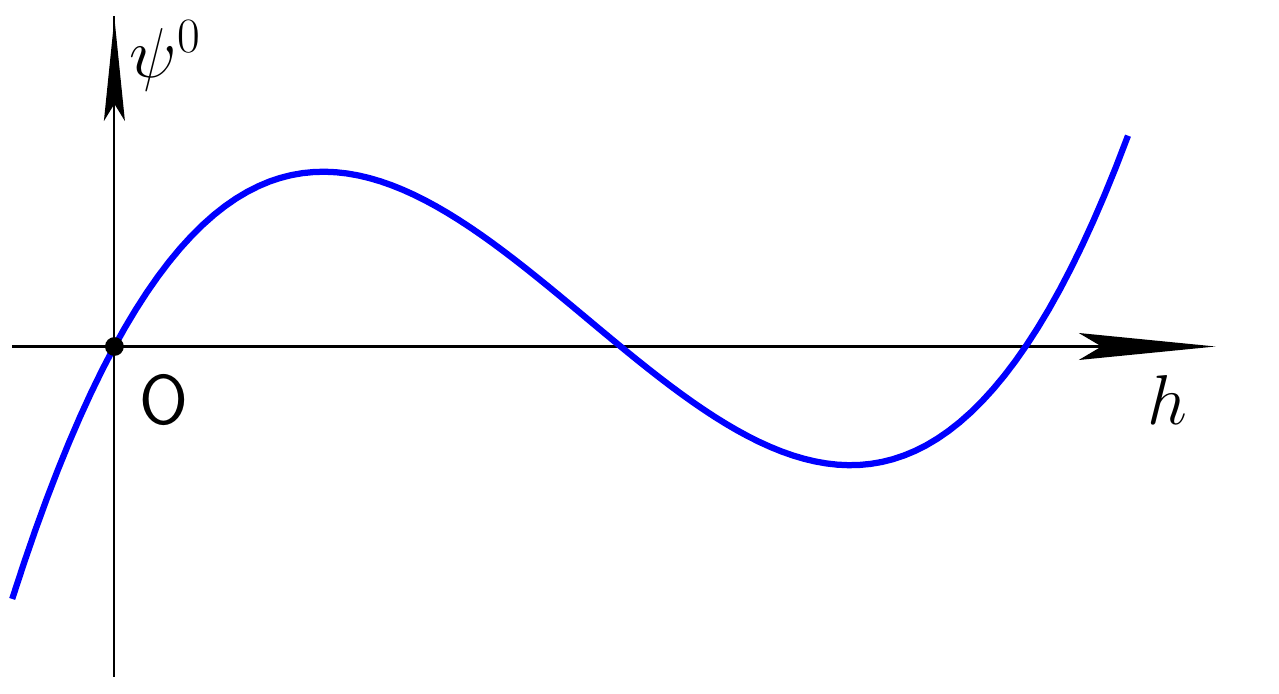}}}
  \caption{Contribution $\, \psi^0 \, $ to the function $\, \psi_\lambda $. The case $ \, \alpha>0 \, $ is on the left and the case $ \, \alpha < 0 \, $
  is splitted into two figures: two complex roots in the middle and three real roots on the right.}
    \label{psi-zero}
\end{figure}

\begin{figure}
  \centerline{{\includegraphics[width = .33 \textwidth] {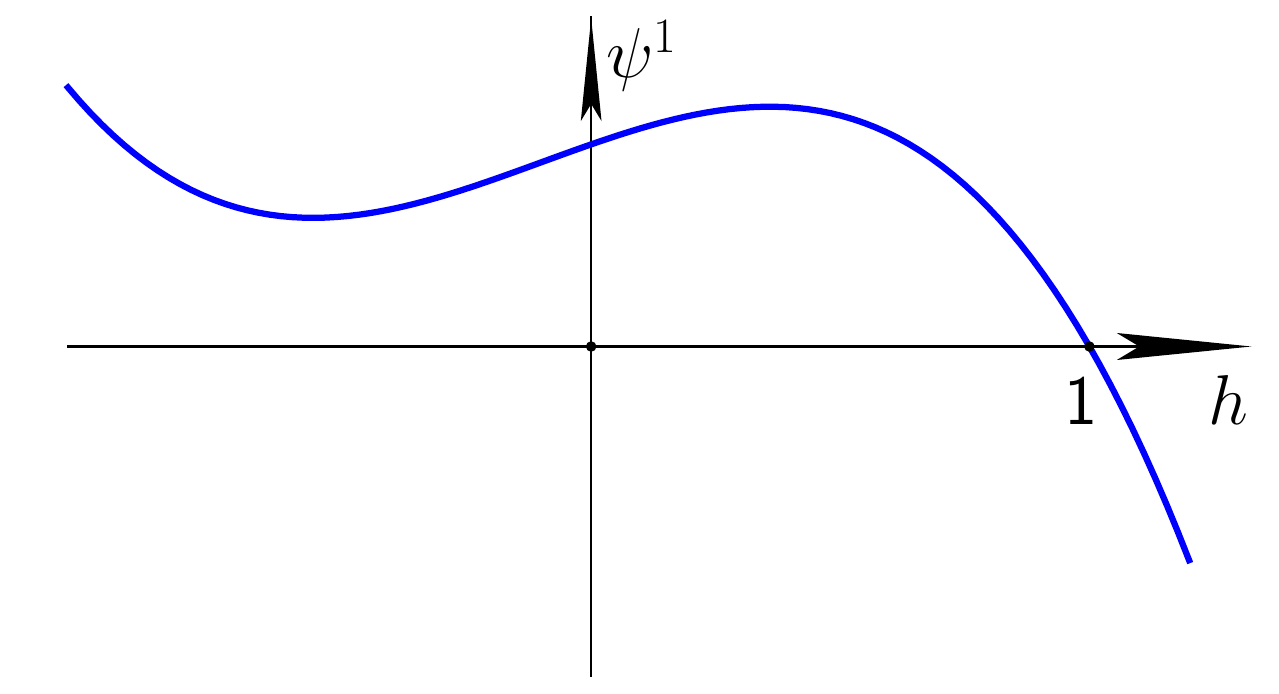}}
    {\includegraphics[width = .33 \textwidth] {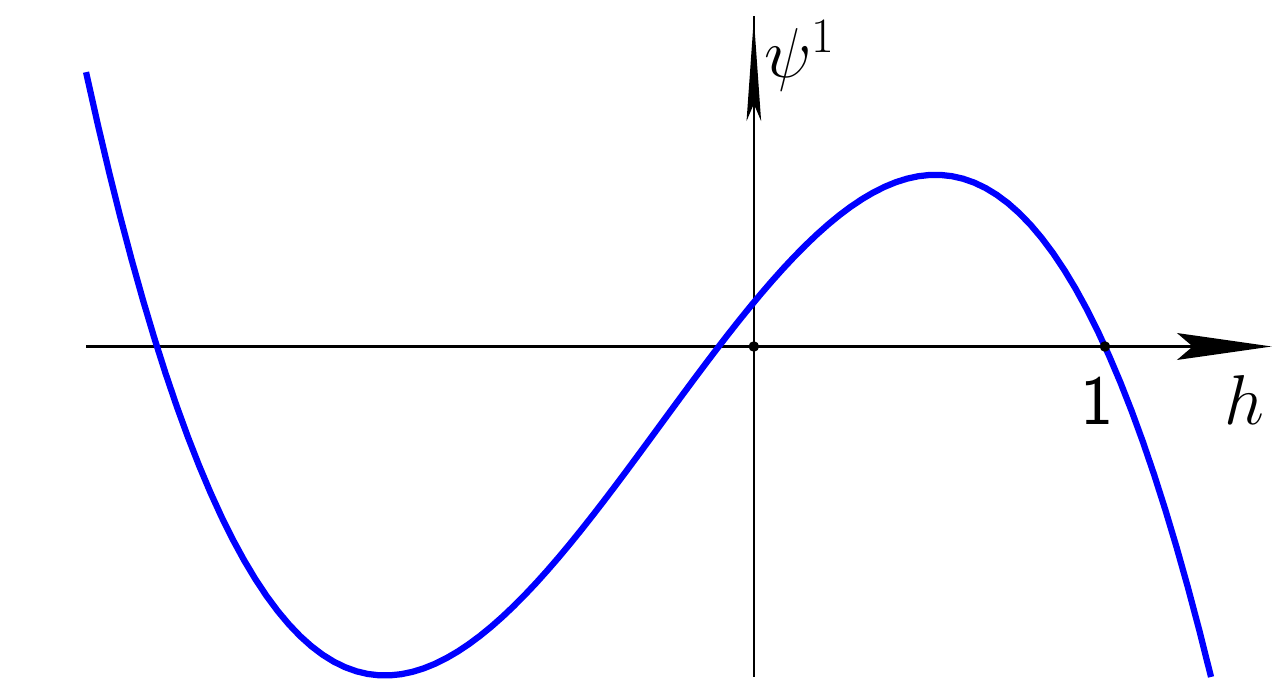}} {\includegraphics[width = .33 \textwidth] {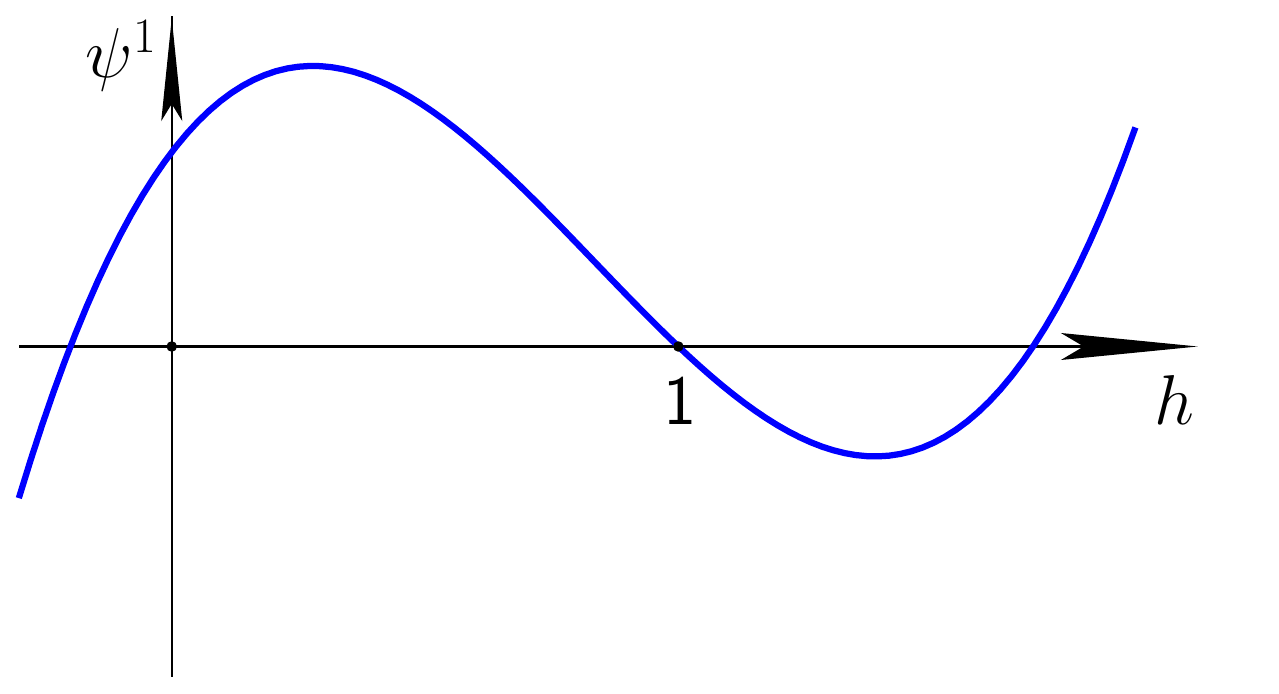}}}
  \caption{Contribution $\, \psi^1 \, $ to the function $\, \psi_\lambda $. The case $ \, \beta>0 \, $ is splitted into two sub-cases: 
    two complex roots on the left and three real roots in the middle; the  case $ \, \beta<0 \, $ is presented     on the right.}
    \label{psi-un}
\end{figure}

\noindent 
Due to Remark  \ref{u-et-un-moins-u}, the determination of the second term relative to $ \, \psi^1 \, $ is very analogous to the term associated to $ \, \psi^0 $.
In the following, we will concentrate essentially to the evaluation of the first integral $ \, I_0 \equiv  \int_0^{\,1/2}\!\!\!  {{\dd h}\over{\sqrt{\psi^0 (h)}}} $. 
   
\begin{proposition} \label{P1-par-morceaux-alpha-0}
  If $ \, \alpha = 0 $, the integral $ \, I_0 \equiv \int_0^{\,1/2} \!\!  {{\dd h}\over{\sqrt{\psi^0 (h)}}} \, $
  admits several  expressions parameterized by $ \, \xi \equiv \lambda \, \sigma_{1/2} $:

(i) if $\, \xi = 1 $, then $ \, \beta > 0 \, $ and $ \, I_0 = 2 \, \sqrt{{6}\over{\beta}} \, $,  
  
(ii) if $\, \xi > 1 $, 
  $\, I_0 = {{2}\over{\sqrt{\xi-1}}} \,  \arctan \big( {{1}\over{\varphi_0}} \big) \, $ with 
$ \,  \varphi_0  = \sqrt{1 + {{\beta\, \xi}\over{6\, (\xi-1)}}} \,\,\, $ and  

(iii) if $\, \xi <  1 $,    $  \, I_0 = {{1}\over{c \sqrt{b_0}}} \, \log {{\sqrt{c^2 + 2} + c}\over{\sqrt{c^2 + 2} - c}} \, $
with $ \, b_0 =  {{\beta}\over{12}} \, \xi + \xi -1 > 0 \, $ and $ \, c = \sqrt{{1-\xi}\over{b_0}} $. 
\end{proposition}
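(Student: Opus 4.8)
The plan is to specialize the cubic $\,\psi^0\,$ of Lemma~\ref{expression-psi-p1-par-morceaux} to $\,\alpha=0$, which annihilates the leading coefficient $\,-\tfrac23\,\alpha\,\xi\,$ and reduces $\,\psi^0\,$ to the quadratic
\[
\psi^0(h) = (1-\xi)\,h^2 + b_0\, h = h\,\big((1-\xi)\,h + b_0\big) \,,\qquad b_0 = {{\beta}\over{12}}\,\xi + \xi - 1 \,.
\]
Before splitting into cases I would record the two sign constraints supplied by the positivity of $\,\psi_\lambda\,$ on $\,(0,1)$: with $\,\alpha=0\,$ the relation $\,{{\dd \psi_\lambda}\over{\dd h}}(0)>0\,$ of Lemma~\ref{expression-psi-p1-par-morceaux} reads exactly $\,b_0>0$, while $\,\psi_\lambda({1\over2}) = {1\over4}\,\big(2\,b_0-(\xi-1)\big)>0\,$ gives $\,2\,b_0-(\xi-1)>0$. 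These two inequalities are precisely what make each closed form below real and well defined, so I would establish them first.

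Case (i), $\,\xi=1$, is immediate: the quadratic collapses to the linear function $\,\psi^0(h)={{\beta}\over{12}}\,h$, positivity forces $\,\beta>0$, and direct integration of $\,h^{-1/2}\,$ on $\,(0,{1\over2})\,$ yields $\,I_0=2\sqrt{6/\beta}$. For case (ii), $\,\xi>1$, the leading coefficient $\,1-\xi\,$ is negative, so I would write $\,\psi^0(h)=(\xi-1)\,h\,(2R-h)\,$ where $\,2R=b_0/(\xi-1)>0\,$ is the nonzero root; the constraint $\,2\,b_0-(\xi-1)>0\,$ is exactly $\,2R>{1\over2}$, so $\,\psi^0\,$ stays positive up to the endpoint. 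The substitution $\,h=2R\,\sin^2\theta\,$ turns $\,\int_0^{1/2}{{\dd h}\over{\sqrt{h(2R-h)}}}\,$ into $\,2\,\arcsin\sqrt{(\xi-1)/(2\,b_0)}$, hence $\,I_0={{2}\over{\sqrt{\xi-1}}}\,\arcsin\sqrt{(\xi-1)/(2\,b_0)}$. The remaining step is cosmetic: if $\,\theta=\arcsin\sqrt{(\xi-1)/(2\,b_0)}\,$ then $\,\tan^2\theta=(\xi-1)/\big(2\,b_0-(\xi-1)\big)$, and a one-line computation gives $\,2\,b_0-(\xi-1)={{\beta}\over{6}}\,\xi+\xi-1$, so that $\,\tan^2\theta=1/\varphi_0^2\,$ with $\,\varphi_0^2=1+{{\beta\,\xi}\over{6\,(\xi-1)}}$; therefore $\,\theta=\arctan(1/\varphi_0)\,$ and the announced formula follows.

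For case (iii), $\,\xi<1$, the leading coefficient $\,1-\xi\,$ is positive and $\,\psi^0(h)=h\,\big((1-\xi)\,h+b_0\big)\,$ is positive on $\,(0,{1\over2})\,$ by $\,b_0>0$. Completing the square and integrating gives $\,I_0={{1}\over{\sqrt{1-\xi}}}\,\operatorname{argch}(1+c^2)\,$ with $\,c^2=(1-\xi)/b_0$, the upper endpoint producing $\,1+{{1-\xi}\over{b_0}}=1+c^2\,$ and the lower endpoint producing $\,1$. Writing $\,\operatorname{argch}(1+c^2)=\log\big(1+c^2+\sqrt{(1+c^2)^2-1}\big)\,$ and using $\,(1+c^2)^2-1=c^2\,(c^2+2)\,$ gives $\,\log\big(1+c^2+c\sqrt{c^2+2}\big)$; finally, rationalizing shows $\,{{\sqrt{c^2+2}+c}\over{\sqrt{c^2+2}-c}}=1+c^2+c\sqrt{c^2+2}$, and since $\,{{1}\over{\sqrt{1-\xi}}}={{1}\over{c\sqrt{b_0}}}\,$ the stated expression follows.

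The main obstacle is not the integration itself — each case is a textbook quadratic under a square root — but the two re-parametrizations that recast $\,\arcsin\,$ as $\,\arctan(1/\varphi_0)\,$ in (ii) and $\,\operatorname{argch}\,$ as the symmetric logarithm in (iii); these are where the sign bookkeeping and the precise algebraic form of $\,b_0\,$ (in particular the identity $\,2\,b_0-(\xi-1)={{\beta}\over6}\xi+\xi-1$) must be handled with care to land exactly on the claimed closed forms.
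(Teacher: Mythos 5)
Your proof is correct, and the case structure (reduce to the quadratic $\psi^0(h)=(1-\xi)h^2+b_0h$, then split on the sign of $\xi-1$) is the same as the paper's; but the integrations themselves follow a different route. In cases (ii) and (iii) the paper uses reciprocal-type substitutions that send the singular endpoint $h=0$ to infinity, namely $\varphi^2=\tfrac{a}{h}-1$ (giving $\int_{\varphi_0}^{\infty}\tfrac{\dd\varphi}{1+\varphi^2}$ and hence $\arctan(1/\varphi_0)$ directly) and $\theta^2=c^2+\tfrac1h$ (giving $\int\tfrac{\dd\theta}{\theta^2-c^2}$ and hence the symmetric logarithm directly), so the announced closed forms appear with no further manipulation. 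You instead use the textbook substitutions $h=2R\sin^2\theta$ and a completion of the square, landing on $\arcsin$ and $\operatorname{argch}$ expressions, and then you need the two conversion identities $\tan^2\theta=(\xi-1)/(2b_0-(\xi-1))=1/\varphi_0^2$ and $\operatorname{argch}(1+c^2)=\log\tfrac{\sqrt{c^2+2}+c}{\sqrt{c^2+2}-c}$ to match the statement; both identities are verified correctly (in particular $2b_0-(\xi-1)=\tfrac{\beta}{6}\xi+\xi-1$ and $\tfrac{\sqrt{c^2+2}+c}{\sqrt{c^2+2}-c}=1+c^2+c\sqrt{c^2+2}$). A small bonus of your version is that you make explicit the two positivity constraints $b_0>0$ and $2b_0-(\xi-1)>0$ (coming from ${{\dd\psi_\lambda}\over{\dd h}}(0)>0$ and $\psi_\lambda(\tfrac12)>0$) that guarantee $\varphi_0$ is real and the $\arcsin$ argument lies in $(0,1)$; the paper asserts $a>0$ and $\varphi_0>0$ without spelling out why $2a-1>0$.
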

\begin{proof}
  We have in this particular case $ \, \psi^0 (h) = -(\xi-1) \, h^2 + \big( {{\beta}\over{12}} \, \xi + \xi -1 \big) \, h $.

  (i) If $\, \xi = 1 $,  $ \,  \psi^0 (h) =  {{\beta}\over{12}} \, h \, $ and $ \, I_0 = \sqrt{{12}\over{\beta}} \,  \int_0^{\,1/2}\!  {{\dd h}\over{\sqrt{h}}}
  =  \sqrt{{24}\over{\beta}} $.
  
  (ii) If $\, \xi > 1 $, we write $ \,  \psi^0 (h) = (\xi-1) \, h \, \Big( \big( 1 + {{\beta\, \xi}\over{12\, (\xi-1)}} \big) -h  \Big) $.
  With $\, a \equiv 1 + {{\beta\, \xi}\over{12\, (\xi-1)}} > 0 $, we make the classical change of variable
  $\, h \, (a-u) = \varphi^2 \, h^2 $. Then $ \, \varphi^2 = {{a}\over{h}} -1 \, $ and $ \, 2 \, \varphi \, \dd \varphi = - {{a}\over{h^2}} \, \dd h $.
If $ \, h={1\over2} $, then $ \, \varphi = \varphi_0 > 0 \, $ with $\, \varphi_0^2 = 2\, a -1 = 1 + {{\beta\, \xi}\over{6\, (\xi-1)}} $.   
  In consequence,
  
  \noindent \qquad $ 
  I_0 = \int_{+\infty}^{\varphi_0} \Big( {{-2 \, h^2 \, \varphi \, \dd \varphi}\over{a \, \sqrt{\xi-1}}} \Big) \, {{1}\over{\varphi \, h}} $
$      = {{2}\over{\sqrt{\xi-1}}} \, \int_{\varphi_0}^{+\infty} \! {{\dd \varphi}\over{1+\varphi^2}} $

  \noindent \qquad \quad $ 
     = {{2}\over{\sqrt{\xi-1}}} \, \big({{\pi}\over{2}} -  \arctan (\varphi_0) \big)  $
$ =  {{2}\over{\sqrt{\xi-1}}} \,  \arctan \big( {{1}\over{\varphi_0}} \big) $

\noindent because $ \,  \varphi_0 > 0 $. 
  
(iii) If $\, \xi <  1 $, we can write $ \,  \psi^0 (h) = b_0 \, ( h + c^2 \, h^2 ) \, $ with
$ \, b_0 =  {{\beta}\over{12}} \, \xi + \xi -1 > 0 \, $ and $ \, c = \sqrt{{1-\xi}\over{b_0}} $.
We make now the change of variable $ \,  h + c^2 \, h^2 = \theta^2 \, h^2 $. Then $ \, \theta^2 = c^2 + {{1}\over{h}} > c^2 \, $ and 
$ \, 2 \, \theta \, \dd \theta = - {{1}\over{h^2}} \, \dd h $. Then

  \noindent \qquad $ 
  I_0 = {{1}\over{\sqrt{b_0}}} \int_{+\infty}^{\sqrt{c^2+2}} (-2 \, \theta \, h^2 \, \dd \theta ) \, {{1}\over{\theta \, h}} $
$ = {{2}\over{\sqrt{b_0}}} \int_{\sqrt{c^2+2}}^{+\infty} \, h \, \dd \theta =  {{2}\over{\sqrt{b_0}}} \int_{\sqrt{c^2+2}}^{+\infty} \, {{\dd \theta}\over{\theta^2 - c^2}} $

  \noindent \qquad  \quad $ 
  =  {{1}\over{c \sqrt{b_0}}} \Big[ \log {{\theta - c}\over{\theta+c}} \Big]_{\sqrt{c^2+2}}^{+\infty}    $
  $ =  {{1}\over{c \sqrt{b_0}}} \, \log {{\sqrt{c^2 + 2} + c}\over{\sqrt{c^2 + 2} - c}} $. 

  \noindent
  The reader will observe that if $ \, \xi \, $ tends to 1, each of the results proposed in (ii) and (iii) converge towards the expression proposed in (i). 
\end{proof}

\begin{proposition} \label{P1-par-morceaux-alpha-strictement-positif}
  If $ \, \alpha > 0$, recall that
  $ \, \psi^0(h) = -{2\over3}\, \alpha \, \xi \, h^3 + (\alpha\, \xi - \xi + 1) \, h^2 + \big( {{\beta-5\, \alpha}\over{12}}\, \xi + \xi -1 \big) \, h $.
  It corresponds to the left part of Figure \ref{psi-zero}. The function $ \, \psi^0 \, $ admits three real roots and we set
  $ \,  \psi^0(h) \equiv a_0 \, (h-r_-) \, h \, (r_+-h) \, $ with $ \, a_0 = {2\over3}\, \alpha \, \xi > 0 \, $ and $ \, r_- < 0 < r_+ $.
  Then $ \, I_0 \equiv \int_0^{\,1/2} \!\!  {{\dd h}\over{\sqrt{\psi^0 (h)}}} = {{2}\over{\sqrt{a_0 \, (-r_-)}}} \, K(\varphi_0 ,\, m)  \, $ with
  $ \, m = {{r_+}\over{r_-}} < 0 \, $ and $ \, \smash{ \varphi_0 = \arcsin \big( {{1}\over{\sqrt{2 \, r_+}}} \big)} $.
\end{proposition}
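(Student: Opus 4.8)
The plan is to reduce the integral to Legendre normal form by a single trigonometric substitution, the incomplete analogue of the computation behind Lemma~\ref{integrales-elliptiques}. First I would record the factorization $\psi^0(h) = a_0\,(h-r_-)\,h\,(r_+-h)$ with $a_0 = \frac23\,\alpha\,\xi > 0$ and examine the sign configuration on the integration window. Since $r_- < 0$ we have $h - r_- > 0$ throughout $[0,\frac12]$, and since $\psi^0$ is positive on $(0,\frac12)$ with $h > 0$ there, the remaining factor $r_+ - h$ must stay positive, which forces $r_+ > \frac12$. This inequality guarantees $\frac{1}{2r_+} < 1$, so that the amplitude $\varphi_0 = \arcsin\big(1/\sqrt{2r_+}\big)$ is a genuine angle in $(0,\frac{\pi}{2})$; it also makes the integrand real and strictly positive on the open interval.

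Next I would pull out the constant, writing $I_0 = \frac{1}{\sqrt{a_0}}\int_0^{1/2}\frac{\dd h}{\sqrt{(h-r_-)\,h\,(r_+-h)}}$, and introduce the substitution $h = r_+\sin^2\theta$, so that $\dd h = 2\,r_+\sin\theta\cos\theta\,\dd\theta$. Under this change one computes $h = r_+\sin^2\theta$, $\ r_+ - h = r_+\cos^2\theta$ and $\ h - r_- = (-r_-)\big(1 - \tfrac{r_+}{r_-}\sin^2\theta\big)$, so the radicand becomes $(-r_-)\,r_+^2\,\sin^2\theta\,\cos^2\theta\,(1 - m\sin^2\theta)$ with $m = r_+/r_- < 0$. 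Taking the square root (all of $\sin\theta$, $\cos\theta$, $-r_-$ and $1 - m\sin^2\theta$ are positive on the relevant range), the factor $r_+\sin\theta\cos\theta$ cancels against the corresponding factor in $\dd h$, leaving an overall $2$ and yielding $\frac{\dd h}{\sqrt{(h-r_-)\,h\,(r_+-h)}} = \frac{2\,\dd\theta}{\sqrt{-r_-}\,\sqrt{1 - m\sin^2\theta}}$.

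Finally I would track the limits: $h = 0$ gives $\theta = 0$, while $h = \frac12$ gives $\sin^2\theta = \frac{1}{2r_+}$, that is $\theta = \varphi_0$. Collecting everything produces $I_0 = \frac{1}{\sqrt{a_0}}\cdot\frac{2}{\sqrt{-r_-}}\int_0^{\varphi_0}\frac{\dd\theta}{\sqrt{1 - m\sin^2\theta}} = \frac{2}{\sqrt{a_0\,(-r_-)}}\,K(m,\varphi_0)$, the claimed identity. I do not expect a serious obstacle here; the reduction is completely explicit once the substitution $h = r_+\sin^2\theta$ is selected, and the only points demanding care are the sign of $r_-$ (so that $-r_-$, and hence $\sqrt{a_0(-r_-)}$, are real while $m$ is negative) and the verification $r_+ > \frac12$ that legitimizes both the square root and the admissible range of the amplitude. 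I would also double-check the ordering of the arguments against the paper's convention $K(m,\varphi) = \int_0^{\varphi}\dd\theta/\sqrt{1 - m\sin^2\theta}$, since the statement displays them in the reverse order.
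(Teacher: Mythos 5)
Your proposal is correct and follows exactly the paper's own route: the substitution $h = r_+\sin^2\theta$, the factorization of the radicand into $a_0\,(-r_-)\,r_+^2\sin^2\theta\cos^2\theta\,(1-m\sin^2\theta)$ with $m = r_+/r_-<0$, and the upper limit $\sin^2\varphi_0 = 1/(2r_+)$. Your added checks (that $r_+>\tfrac12$ so the amplitude is well defined, and the argument-order convention for $K$) are sound and slightly more careful than the paper's own write-up.
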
 
\begin{proof} 
  The explicitation of the algebraic expressions of $ \, r_- \, $ and $ \, r_+ \, $ has no interest and is not detailed here. 
  To compute $ \, I_0 $, we make the change of variables $ \, h = r_+ \, \sin^2 \theta $. Then $ \, \dd h = 2 \, r_+ \, \sin \theta \, \cos \theta $.
  Then
  
  \noindent \qquad $ 
  \psi^0(h) \, = a_0 \, (r_+ \, \sin^2 \theta - r_-) \,  r_+ \, \sin^2 \theta \, r_+ \, \cos^2 \theta $
  
  \noindent \qquad  \qquad  \quad $ 
= a_0 \, r_+^2 \,  \sin^2 \theta \,  \cos^2 \theta \, (-r_-) \, \big( 1 -  {{r_+}\over{r_-}}\, \sin^2 \theta \big) $

  \noindent \qquad  \qquad  \quad $ 
 = a_0  \, (-r_-) \,  r_+^2 \,  \sin^2 \theta \,  \cos^2 \theta \, (1-m \, \sin^2 \theta)     $ 

  \noindent
  with $ \,  m = {{r_+}\over{r_-}} < 0 $. 
  Moreover, if $ \, h = {1\over2} $, then $\, \theta = \varphi_0 \, $ with $ \, \sin^2 \varphi_0 = {{1}\over{2 \, r_+}} $.
  Then
  $ \, I_0 = \int_0^{\,\varphi_0} \! {{2 \, \dd \theta}\over{\sqrt{a_0 \, (-r_-)}}} \, {{1}\over{\sqrt{1 -   m \, \sin^2 \theta}}} \,   $  
and the result is established.   
\end{proof}

\begin{proposition} \label{P1-par-morceaux-alpha-strictement-negatif}
  The case  $ \, \alpha < 0$, corresponds to the middle and right pictures of  Figure \ref{psi-zero}.
  Recall that 
  $ \, \psi^0(h) = -{2\over3}\, \alpha \, \xi \, h^3 + (\alpha\, \xi - \xi + 1) \, h^2 + \big( {{\beta-5\, \alpha}\over{12}}\, \xi + \xi -1 \big) \, h $.

  (i) If the polynomial  $ \, \psi^0(h) \, $ has two complex roots, we write it under the form
  $ \,  \psi^0(h) \equiv a_0 \, h \, (h^2 + \mu \, h + \zeta^4) \, $ with  $ \, a_0 = {2\over3}\, |\alpha| \, \xi > 0 \, $ and we have the inequality
  $ \, \mu^2 - 4 \, \zeta^4 < 0 $; we introduce $ \, m = {1\over2} - {{\mu}\over{4 \, \zeta^2}} \, $ that satisfies $ \, 0 < m < 1 \, $ and 
  $ \, \varphi_0 = 2 \, \arctan \big( {{1}\over{\zeta  \sqrt{2}}} \big) $. Then we have
  $ \, I_0 = {{1}\over{\zeta  \sqrt{a_0}}} \, K(\varphi_0 ,\, m) $.

  (ii) If the polynomial  $ \, \psi^0(h) \, $ has three real roots, we set
  $ \,  \psi^0(h) \equiv a_0 \, h \, (r_- - h) \, (r_+-h) \, $ with  $ \, a_0 = {2\over3}\, |\alpha| \, \xi > 0 \, $ and $ \, 0 < r_- < r_+ $;
  we consider $ \, m = {{r_-}\over{r_+}} \in (0,\, 1) \, $ and $ \,  \varphi_0 = \arcsin \big( {{1}\over{\sqrt{2 \, r_-}}} \big) $.
The integral $ \, I_0 \, $ is  computed with the following expression: $ \, I_0 = {{2}\over{\sqrt{a_0 \, r_+}}} \,  K(\varphi_0 ,\, m) $.
\end{proposition}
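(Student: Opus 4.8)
The plan is to treat both sub-cases by the same strategy used in Proposition~\ref{P1-par-morceaux-alpha-strictement-positif}: start from the factored form of the cubic, check that the integrand is real and positive on $(0,\frac12)$, and reduce $I_0$ to Legendre normal form through a single trigonometric substitution whose endpoints furnish the amplitude $\varphi_0$ and whose Jacobian cancels the algebraic prefactor down to $K(\varphi_0,m)$. Since $h=0$ is always a root of $\psi^0$ and $a_0=\frac23|\alpha|\xi>0$, in each case $\psi^0(h)>0$ for $0<h<\frac12$; in case (ii) this positivity forces $r_-\ge\frac12$, which is exactly what makes $\varphi_0=\arcsin\frac{1}{\sqrt{2r_-}}$ well defined.

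For the three–real–roots case (ii) I would mimic the computation for $\alpha>0$ almost verbatim, the only change being that the two non-zero roots are now both positive. Setting $h=r_-\sin^2\theta$, so $\dd h=2r_-\sin\theta\cos\theta\,\dd\theta$, one gets $r_--h=r_-\cos^2\theta$ and $r_+-h=r_+\big(1-\frac{r_-}{r_+}\sin^2\theta\big)=r_+(1-m\sin^2\theta)$ with $m=\frac{r_-}{r_+}\in(0,1)$. Hence $\sqrt{\psi^0(h)}=\sqrt{a_0 r_+}\,r_-\sin\theta\cos\theta\,\sqrt{1-m\sin^2\theta}$, the factor $r_-\sin\theta\cos\theta$ cancels against $\dd h$, and the limits $h=0,\frac12$ become $\theta=0,\varphi_0$ with $\sin^2\varphi_0=\frac1{2r_-}$. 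This yields $I_0=\frac{2}{\sqrt{a_0 r_+}}K(\varphi_0,m)$, as claimed.

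The delicate part is the complex–root case (i), where $\psi^0(h)=a_0 h\,(h^2+\mu h+\zeta^4)$ with $\mu^2-4\zeta^4<0$, so the quadratic factor has conjugate roots $-\frac{\mu}{2}\pm i\sqrt{\zeta^4-\mu^2/4}$ and stays positive. Here the real root $0$ together with a complex pair call for the Byrd--Friedman reduction rather than a plain $\sin^2$ substitution. I would introduce the homographic–trigonometric change of variable $\cos\varphi=\frac{\zeta^2-h}{\zeta^2+h}$, equivalently $h=\zeta^2\tan^2\frac\varphi2$, which sends $h=0$ to $\varphi=0$ and $h=\frac12$ to $\cos\varphi_0=\frac{\zeta^2-1/2}{\zeta^2+1/2}$; using $\cos2\alpha=\frac{1-\tan^2\alpha}{1+\tan^2\alpha}$ this is precisely $\varphi_0=2\arctan\frac{1}{\zeta\sqrt2}$. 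The quantity governing this configuration is $A=\sqrt{(\mu/2)^2+(\zeta^4-\mu^2/4)}=\zeta^2$, and the corresponding modulus is $m=\frac12-\frac{\mu}{4\zeta^2}$, which lies in $(0,1)$ exactly because $|\mu|<2\zeta^2$, i.e. because $\mu^2-4\zeta^4<0$.

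The main obstacle I expect is the bookkeeping in case (i): one must verify carefully that after the substitution the radicand collapses to $\zeta^2(1-m\sin^2\varphi)$ up to the remaining factor $\sqrt{a_0}$, and that the Jacobian $\dd h/\dd\varphi$ combines with the two surviving square roots to leave exactly the Legendre integrand times $\frac{1}{\zeta\sqrt{a_0}}$. This is a routine but error-prone identity; rather than relying on the geometric substitution I may instead confirm it by direct differentiation, checking that $\frac{\dd}{\dd\varphi}\big(\zeta^2\tan^2\frac\varphi2\big)$ divided by $\sqrt{a_0\,h(h^2+\mu h+\zeta^4)}$ equals $\frac{1}{\zeta\sqrt{a_0}\,\sqrt{1-m\sin^2\varphi}}$ as an identity in $\varphi$, which then gives $I_0=\frac{1}{\zeta\sqrt{a_0}}K(\varphi_0,m)$ upon integration. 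Case (ii), by contrast, is essentially a transcription of Proposition~\ref{P1-par-morceaux-alpha-strictement-positif} and should present no difficulty.
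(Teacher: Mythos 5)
Your proposal is correct and follows essentially the same route as the paper: case (ii) via $h=r_-\sin^2\theta$, and case (i) via $h=\zeta^2\tan^2\frac{\varphi}{2}$ (the paper writes this as $h=\zeta^2 t^2$ with $t=\tan\frac{\theta}{2}$), with the same amplitude $\varphi_0$ and modulus $m$ in each case. The "bookkeeping" you defer in case (i) is precisely the computation the paper carries out, namely $h^2+\mu h+\zeta^4=\zeta^4(1+t^2)^2\bigl(1-m\,\tfrac{4t^2}{(1+t^2)^2}\bigr)=\zeta^4(1+t^2)^2(1-m\sin^2\theta)$, after which the Jacobian $\zeta^2 t(1+t^2)\,\dd\theta$ cancels as you predict.
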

\begin{proof}
  This  proof is directly inspired by the book \cite{AS} (paragraph 17.4.61 and 17.4.70). Nevertheless, we give the details herein for a complete explanation of the
  final relations. 
  
  (i) We operate the  change of variable $ \, h = \zeta^2 \, t^2 \, $ with $ \, t = \tan {{\theta}\over{2}} $. Then

  \noindent \qquad $ 
  \psi^0(h) \, = a_0 \, \zeta^2 \, t^2 \, \big( \zeta^4 \, (1+t^4) + \mu \, \zeta^2 \, t^2 \big) $

  \noindent \qquad  \qquad  \quad $ 
= a_0 \, \zeta^2 \, t^2 \, \big( \zeta^4 \, (1+t^4) + \mu \, \zeta^2 \, t^2 \big) $

  \noindent \qquad  \qquad  \quad $ 
= a_0 \, \zeta^6 \, t^2 \, \big( (1+t^2)^2 - 2\, t^2 + {{\mu}\over{\zeta^2}}  \, t^2 \big) $

  \noindent \qquad  \qquad  \quad $ 
  = a_0 \, \zeta^6 \, t^2 \,(1+t^2)^2 \, \big[ 1- \big( {1\over2} - {{\mu}\over{4 \, \zeta^2}} \big) \, {{4 \, t^2}\over{(1+t^2)^2}} \big] $.

  \noindent 
  Because the discriminant  $ \, \mu^2 - 4 \, \zeta^4 < 0 $ is negative, we have

  \centerline { $
     {{|\mu|}\over{2 \, \zeta^2}} < 1 \, $
     and $ \, -1 < -  {{\mu}\over{2 \, \zeta^2}} < 1 $.}

  \noindent 
    In other terms, $ \,  -{1\over2} <  - {{\mu}\over{4 \, \zeta^2}} < {1\over2} \, $ and $ \, 0 < m \equiv  {1\over2} - {{\mu}\over{4 \, \zeta^2}} < 1 $. 
    We remark that $ \, \sin \theta = {{2 \, t}\over{1+t^2}} \, $ and we get
    $ \,  \psi^0(h)  = a_0 \, \zeta^6 \, t^2 \,(1+t^2)^2 \, ( 1 - m \, \sin^2\theta) $. We have also $ \, \dd h = 2 \, \zeta^2 \, t \, (1+t^2) \, {{\dd \theta}\over{2}} \, $ 
    and  $ \, \dd h = \zeta^2 \, t \, (1+t^2) \, \dd \theta $. With this change of variables, the upper bound is equal to $ \, \varphi_0 \, $ such that
    $ \, \zeta^2 \, \tan^2{{\varphi_0}\over{2}} = {1\over2} $, {\it id est} $\, \varphi_0 = 2 \, \arctan \big( {{1}\over{\zeta \sqrt{2}}} \big) $. We can now achieve the calculus
    of the integral:
    
  \noindent \qquad $ 
  I_0 =  \int_0^{\,1/2} \!\!  {{\dd h}\over{\sqrt{\psi^0 (h)}}} $
$     = \int_0^{\,\varphi_0} \! {{\zeta^2 \, t \, (1+t^2) \, \dd \theta}\over{ \sqrt{a_0} \, \zeta^3 \, t \,(1+t^2) \, \sqrt{ 1 - m \, \sin^2\theta }}} $

  \noindent \qquad \quad $ 
    =  {{1}\over{\zeta  \sqrt{a_0}}} \,  \int_0^{\,\varphi_0} \! {{\dd \theta}\over{\sqrt{ 1 - m \, \sin^2\theta }}} =  {{1}\over{\zeta  \sqrt{a_0}}} \, K(\varphi_0 ,\, m) $.

    (ii) If the polynomial  $ \, \psi^0(h)  \equiv a_0 \, h \, (r_- - h) \, (r_+-h) \, $  has three real roots, we set
    $ \, h = r_- \, \sin^2 \theta \, $ and

    \centerline {$ 
  \psi^0(h)  = a_0 \,\, r_- \, \sin^2 \theta \,\,  r_- \, \cos^2 \theta \, (r_+ - r_- \, \sin^2 \theta)
  = a_0 \,\,r_-^2 \, \sin^2 \theta \, \cos^2 \theta \, \big( 1 - {{r_-}\over{t_+}} \, \sin^2 \theta \big) $.}

  \noindent 
    With $\, 0 < m \equiv {{r_-}\over{t_+}} < 1 $, we have $ \,  \psi^0(h)  = a_0 \,\,r_-^2 \, \sin^2 \theta \, \cos^2 \theta \, (1 - m \, \sin^2 \theta) $.
    The upper bound  $ \, \varphi_0 \, $ of the     integral is associated to the relation $ \, {1\over2} = r_- \, \sin^2 \varphi_0 \, $ and
    $ \,  \varphi_0 = \arcsin \big( {{1}\over{\sqrt{2 \, r_-}}} \big) $. We have finally
    $ \, , I_0 =  \int_0^{\,1/2} \!\!  {{\dd h}\over{\sqrt{\psi^0 (h)}}}
    =  \int_0^{\,\varphi_0} \! {{1}\over{\sqrt{a_0 \, r_+}}} \,   {{2 \, \dd \theta}\over{\sqrt{1 - m \, \sin^2 \theta}}} \, $ and the result is established.    
\end{proof} 

\begin{remark} \label{psi-zero-et-psi-un}
  As emphazised previously, the computation of the second term
  
  \centerline {$
    I_1 \equiv  \int_{1/2}^{\,1} \!\!  {{\dd h}\over{\sqrt{\psi^1 (h)}}}
    =  \int_{0}^{\,1/2} \!\!  {{\dd h}\over{\sqrt{\psi^1 (1-h)}}}   $}

  \noindent 
  of the global integral $\, I_{\lambda} = I_0 + I_1 \, $  is obtained from the change of parameters $\,  \alpha \longleftrightarrow (-\beta) \, $
in Propositions \ref{P1-par-morceaux-alpha-0}, \ref{P1-par-morceaux-alpha-strictement-positif} and \ref{P1-par-morceaux-alpha-strictement-negatif}. 
The different cases are illustrated on   Figure~\ref{psi-un}.
\end{remark} 
This ends the proof of Theorem \ref{theorem-analytical}.
\end{proof}

\section{Semi-analytical approximation}
\label{sec:semi-analytical} 

In this section, we revisit the results of Dellacherie {\it et al.} \cite{SYNASC16}, where an approximation of $\psi_{\lambda}$ is constructed thanks to the function $\frac12[h(1-h)-\lambda V_{{\tilde a}, {\tilde b}}]$, where $V_{{\tilde a}, {\tilde b}}$ is an approximation of a solution of $-X''= \Sigma$ with boundary conditions $X(0)=X(1)=0$. Let $Q_0$ and $Q_1$ be the polynomials $h(h-1)$ and $h^2(h-1)$ respectively. The function $V_{{\tilde a}, {\tilde b}}$ is obtained thanks to the finite element approximation in the space of dimension 2
$ W=\, \{ V_{a, b}= aQ_1+bQ_0 \, \}$ and we seek $({\tilde a},{\tilde b})$ such that
$$\forall V\in W, \int_0^1 V'_{{\tilde a},{\tilde b}}(h)V'(h)dh= \int_0^1 \Sigma(h)V(h)dh.$$
The system on the coefficients $({\tilde a},{\tilde b})$ is
$$\left\{\begin{array}{l} (\int_0^1 (Q'_1)^2dh) {\tilde a}+(\int_0^1 Q'_1Q'_0 dh){\tilde b}= \int_0^1 Q_1(h)\Sigma(h)dh:=S_1\\
(\int_0^1 Q'_1Q'_0 dh){\tilde a}+(\int_0^1 (Q'_0)^2dh) {\tilde b}= \int_0^1 Q_0(h)\Sigma(h)dh:=S_0\end{array}\right.$$
which writes $K\left(\begin{array}{l}{\tilde a}\\ {\tilde b}\end{array}\right)= \left(\begin{array}{l}S_1\\ S_0\end{array}\right)$, where $K=\left(\begin{array}{ll}\frac13&\frac16\\ \frac16&\frac{2}{15}\end{array}\right)$.
We consider again this specific case to be able to compare with the results of section
\ref{sec:analytical}. This specific case is split in two parts: \\
i) $\Sigma$ is the interpolation polynomial:  $\, \Sigma \, : \Sigma(h)= \sigma_0(2h-1)(h-1)+4\sigma_{\frac12}h(1-h)+\sigma_1(h(2h-1))$,\\
one finds

\centerline {$
   \widetilde{V_{{\tilde a}, {\tilde b}}} = [{1\over5} \, \big( (3\, \sigma_0 + 4\, \sigma_{1/2} -2 \, \sigma_1 )\, (1-h)
+ (-2\,  \sigma_0 + 4\, \sigma_{1/2} + 3 \, \sigma_1 )\, h \big)]h(1-h) $.}

ii) $ \, \Sigma \, $ is piecewise affine:\\
   \centerline   {$
    \Sigma(h) =  \left \{ \begin {array}{l} \sigma_0 \, (1 - 2\, h) + 2 \, \sigma_{1/2} \, h \,\,\,\,\,\,\,\,\,\,\,\, {\rm if} \, h \leq {1\over2} \\
      2 \, \sigma_{1/2} \, (h-1) + \sigma_1 \, ( 2\, h - 1)  \,\,\,\,\,\,\, {\rm if} \, h \geq {1\over2} \,. 
    \end{array} \right.  $}
    
     We obtain after some elementary calculus
   
\centerline {$
   \widetilde{\Sigma} = {1\over16} \, \big( (11\, \sigma_0 + 10 \, \sigma_{1/2} - 5 \, \sigma_1 )\, (1-h)
+ (-5\,  \sigma_0 + 10\, \sigma_{1/2} + 11 \, \sigma_1 )\, h \big) $.}

\noindent
Then the process follows analogously to the first case with an affine function. In \cite{Du24} an implementation in {\it Python} is proposed.

\section{Numerical method}
\label{sec:numerical}

In this section, we describe a general numerical method (valid for any continuous function $\, \Sigma $)
which solves directly the coupled problem (\ref{modele-couple}) and finds an equation
for computing $ \, \lambda_* \, $ introduced in lemma \ref{lem:hstar}. This numerical method needs only to solve one equation with one single real unknown. 
\begin{theorem}
The equation 
\begin{equation} \label{equation-lambda-discret}\sum_{j=0}^{N-1} {{h_{j+1} - h_j}\over{ {1\over2} \Big(\sqrt{\psi_\lambda(h_j)} + \sqrt{\psi_\lambda(h_{j+1})} \, \Big)}}  = 1 \, .
\end{equation}
where $ \, h_j \equiv \sin^2 \big( {{\pi \, j}\over{2 \, N}} \big) \, $
returns a unique $\lambda^N$ which converges to the exact value of $\lambda$.
\label{5.1}
\end{theorem}
\begin{proof}
Recall that problem (\ref{modele-couple}) is  
\begin{equation*} \left \{ \begin {array}{l}
    - \varphi''(z) + \varphi(z) = \lambda \, \Sigma \big(h(z) \big) \,  \varphi(z) ,\,   h'=  \varphi(z)   \,, \,
    0 < z < 1 \,, \\
    h(0) = 0 , \, h(1) = 1 , \, \varphi(0) = 0 ,\, \varphi(1) = 0 ,\, \lambda \geq 0 ,\, \varphi > 0.  
  \end{array} \right.  \end{equation*}
We introduce a nonregular meshing $\, 0 = z_0 < z_1 < \, $... $ < z_{N-1} < z_N = 1 \, $ of the interval $\, [0,\, 1]\, $
and we set $ \, \Delta z_{j+1/2} \equiv z_{j+1} - z_j \, $ for $ \, j = 1 ,\, $ ... $ \,, \, N$.
We integrate the differential equation  $ \,  h'(z) =  \varphi(z) \, $        with the Crank-Nicolson scheme:
\begin{equation}  \label{maillage-fev2025} {{h_{j+1} - h_j}\over{\Delta z_{j+1/2}}} = {1\over2} \, \big( \varphi_j + \varphi_{j+1} \big) \,\, {\rm for} \,
  j = 0 ,\, {\rm ...} \,,\, N-1 \, .  \end{equation}
Observe that the relation (\ref{maillage-fev2025}) defines the mesh through the steps $ \, \Delta z_{j+1/2} $. 
Then after two integrations (see equalitites (\ref{aaa})), the first equation can be written as 
$ \, \varphi_j = \sqrt{\psi_\lambda(h_j)} \, $ and the  Crank-Nicolson scheme takes the form
\begin{equation}  {{h_{j+1} - h_j}\over{ {1\over2} \, \Big(\sqrt{\psi_\lambda(h_j)} + \sqrt{\psi_\lambda(h_{j+1})} \,\Big)}} = \Delta z_{j+1/2}(\lambda)  \,  . \end{equation}
Note that the choice of the Crank-Nicolson algorithm allows to recover a discretization of each
sub-problem (namely the idealized neutronic one and the simplified thermo-hydraulic one), the discretization of the equation on $\varphi$ is chosen as

$$2\frac{\frac{\varphi_{j+1}-\varphi_j}{\Delta z_{j+\frac12}}-\frac{\varphi_{j}-\varphi_{j-1}}{\Delta z_{j-\frac12}}}{\Delta z_{j+\frac12}+\Delta z_{j-\frac12}}=\lambda\frac{\psi_{\lambda}(h_{j+1})-\psi_{\lambda}(h_{j-1})}{h_{j+1}-h_{j-1}}\frac{h_{j+1}-h_{j-1}}{\Delta z_{j+\frac12}+\Delta z_{j-\frac12}},$$
for $j=1, ..., N-1$.
We impose the values $ \, h_j \equiv \sin^2 \big( {{\pi \, j}\over{2 \, N}} \big) \, $ in order to take into account the singularities
and two boundary conditions of the problem at $ \, z = 0 \, $ and $ \, z = 1 $.
The notation $ \,  \Delta z_{j+1/2}(\lambda) \, $ in the right hand side of the previous relation
is justified by the fact that if the numbers $ \, h_j \, $ are given, the left hand side is a simple function
of the scalar $ \, \lambda $. 
The number $ \, \lambda \, $ is {\it a priori} not known, but we have the natural relation
$ \, \sum_{j=0}^{N-1}  \Delta z_{j+1/2} (\lambda) = 1 \,$ which is (\ref{equation-lambda-discret}).

Recall that $\psi_{\lambda}$ and $V$ are respectively introduced in (\ref{psilambda}) and (\ref{Vfunction}).

The proof of the Theorem is then a consequence of
\begin{lemma}
  Assume $\Sigma\in L^{\infty}$ (such that $V\in C^1[0,1]$).
Define $$\lambda_{max}=\frac12\mbox{sup}_{[0,1]}\frac{h(h-1)}{V(h)}.$$ 

(i) For $\lambda>\lambda_{max}$, $\psi_{\lambda}(h_j)>0$ for all $j$.

  (ii) Equation (\ref{equation-lambda-discret}) has a unique solution $\lambda\in(\lambda_{max}, +\infty)$ when
  
\begin{equation}\label{condition}
\sum_{j=0}^{N-1}{{h_{j+1} - h_j}\over{ {1\over2} \big(\sqrt{\psi_{\lambda_{max}}(h_j)} + (\sqrt{\psi_{\lambda_{max}}(h_{j+1})} \,  \big)}}\in {\R}_+>1.\end{equation}
\end{lemma}
\begin{proof}
We check first that the function $h\rightarrow \frac{h(h-1)}{2V(h)}$ can be extended by continuity at $h=0$ and $h=1$ and is $C^0$ on $[0, 1]$, hence it has at least a point of maximum in $[0, 1]$.\\
  (i) One notes that $V<0$ on $[0,1]$ thanks to $\Sigma\geq \Sigma_*>0$. As $\psi_{\lambda}(h)= (-2V(h))(\lambda-\frac12\frac{h(1-h)}{-V(h)})$, and $\frac{h(h-1)}{2V(h)}\leq \lambda_{max}$, $\psi_{\lambda}(h)\geq (-2V(h))(\lambda-\lambda_{max})$ hence is never 0 for $\lambda>\lambda_{max}$.\\
(ii) We remark that, for all $\lambda\geq \lambda_{max}$, there is no $j\in\{0, .., N-1\}$ such that $\sqrt{\psi_{\lambda}(h_j)}$ and $\sqrt{\psi_{\lambda}(h_{j+1})}$ vanish simultaneously. Hence the quantity\\ \centerline{$
\sum_{j=0}^{N-1}{{h_{j+1} - h_j}\over{ {1\over2} \big(\sqrt{\psi_{\lambda_{max}}(h_j)} + (\sqrt{\psi_{\lambda_{max}}(h_{j+1})} \,  \big)}}$}\\ is never infinite.\\
The quantity $I^N(\lambda)=
\sum_{j=0}^{N-1}{{h_{j+1} - h_j}\over{ {1\over2} \big(\sqrt{\psi_{\lambda}(h_j)} + (\sqrt{\psi_{\lambda}(h_{j+1})} \,  \big)}}$ goes to $0$ when $\lambda\rightarrow +\infty$ for a fixed $N$, and is decreasing to 0 when $\lambda\rightarrow +\infty$. Hence a necessary and sufficient condition for having $\lambda^N>\lambda_{max}$ such that $I^N(\lambda^N)=1$ is $I^N(\lambda_{max})>1$. Item (ii) is proved.\\
\end{proof}
This ends the proof of Theorem \ref{5.1}.
\end{proof}
It could happen that $\sqrt{\psi_{\lambda_{max}}(h_j)} + \sqrt{\psi_{\lambda_{max}}(h_{j+1})}=0$, and we have to deal with these cases, which occur if and only if there exists $j$ such that $\psi_{\lambda_{max}}(h_j)=\psi_{\lambda_{max}}(h_{j+1})=0$. If $j$ is one of the points such that $\mbox{argmax}_{k}\frac{h_k(h_k-1)}{2V(h_k)}=j$ we have two cases:\\
 If a neighbor $j'$ of $j$ yields also a point of maximum, then there exist a solution to (\ref{equation-lambda-discret}) in $(\frac{h_j(h_j-1)}{2V(h_j)}, +\infty)$ because the function $\frac{\vert h_{j'}-h_j\vert}{\sqrt{\psi_{\lambda}(h_j)} + \sqrt{\psi_{\lambda}(h_{j+1})}}$ goes to $+\infty$ when $\lambda\rightarrow \mbox{max}_{k}\frac{h_k(h_k-1)}{2V(h_k)}:=\lambda^{N, j}$.\\
  If $j$ is unique, the necessary and sufficient condition for existence of a solution $\lambda$ is $I^N(\lambda^{j, N})>1$. This condition is less easy to check than $I^N(\lambda_*)>1$.

%
%
%
For a fixed discretization with $ \, N \, $ mesh points, a Newton algorithm
can be implemented without difficulty. With this procedure,  we recover on one hand  an approximated value $ \, \lambda_N \, $ of the 
unknown $ \, \lambda \, $ and on the other hand the entire approximate  solution of the problem $ \, h_j \approx h(z_j) \, $ and
$ \, \varphi_j \approx \sqrt{\psi_\lambda (h_j)} $. Observe that at convergence of the Newton algorithm, the abscissae $ \, z_j\, $
are function of the solution~$ \, \lambda_N \, $ and the converged space mesh is a result of the problem.
This coupled problem can be reduced to a single equation with only one real variable even after discretization!

\section{Numerical results.}
\label{sec:results}

{\it Toy model and its results:}\\
Consider the decreasing data $ \, \sigma_0=8 $, $ \, \sigma_{1/2} = 6 \, $ and $ \, \sigma_1 = 3 $.
They are presented in Figures \ref{double-psi-phi-hh2} and \ref{double-psi-phi-hh}, based on functions $\Sigma$ given in Figure \ref{sigma}.
We obtain the following exact values for the scalar parameter:
$ \, \lambda = 1.89036 \, $ in the decoupled case (case 0), 
$ \, \lambda = 1.99533 \, $ in the affine case  (case 1), 
$ \, \lambda = 1.86593 \, $ in the parabolic case  (case 2), 
$ \, \lambda = 1.89454 \, $ in the piecewise affine case (case 3) and 
$ \, \lambda = 1.85769 \, $ in the parabolic case approached by an affine functions (semi-analytical case 1) 
$ \, \lambda = 1.88614 \, $ in the piecewise affine case projected on affine functions (semi-analytical case 2). 
For each of these six cases, we have observed that our numerical approach gives converging results at second order accuracy in $O(\frac{1}{N^2})$
for the parameter $ \, \lambda $.\\
{\it Application to fictitious neutronical data by homogeneity:}

The problem depends only on $\lambda \Sigma(h)$, hence on $\lambda \sigma_0, \lambda \sigma_{1/2}, \lambda \sigma_1$, hence multiplying the quantities $\sigma_j, j=0, 1/2, 1$ by a constant $k_0$ and dividing $\lambda$ by the same constant does not change the problem we have to solve.

In neutronics problems, such as the equation ($\dagger$), a stationary solution is possible if and only if one finds a solution of $-(D\phi_0')'+\Sigma_a\phi_0=\nu \Sigma_f\phi_0$, that is $k=1$ in (\ref{neutronics}). If $k>1$ in neutronics, the PDE ($\dagger$) has a solution which is exponentially growing in time (divergence) and if $k<1$, the PDE ($\dagger$) has a solution which is exponentially decreasing in time (extinction). We observe  that, in our indealized, unphysical model problem, $\lambda$ is close to $2$. To be close to a reasonable situation, one uses the scaling of $\sigma_j$ by $c_0$ which induces the scaling of $\lambda$ by $\frac{1}{c_0}$ and we choose $c_0=1.86593$. If one wants to relate these calculations with the usual problems solved in the research centers studying nuclear reactors, $ \, \sigma_0 = 14.92744 $,
$ \, \sigma_{1/2} = 11.19558 \, $ and $ \, \sigma_1 = 5.59779 $. If we revert to the notation of (\ref{neutronics}), making this choice of values of $\sigma_i$ for $\nu \Sigma_f$ returns a value of $k$ close to 1, exactly equal to 1 in the case called 1 (affine case). \\
This yields, in this renormalized equivalent case the following exact values for $ \, k^* \, $:\\
$ \, 0.98708 \, $ in the decoupled case (case 0),
$ \, 0.93515  \, $ in the affine  case (case 1),
$ \, 1.00000  \, $ in the parabolic   case (case 2),
$ \, 0.98490  \, $ in the piecewise affine  case (case~3),
$ \, 1.00444  \, $ in the parabolic case approached by an affine functions (semi-analytical case 1) and 
$ \, 0.98928  \, $ in the piecewise affine case projected on affine functions (semi-analytical case 2).
This investigation shows that sometimes, the exigence of accuracy of the operational calculations
\cite{bensch}, \cite{duder} could be lightened. Indeed, the benchmarks generally used in the neutronicians community relies on precisions on $k^*$ (based on the calculation of $(k^*-1)/k^*$, and an acceptable precision between two benchmarks correspond to a difference in $(-15{\rm pcm}, 15{\rm pcm})$ (pcm: (per cent mille)), while the results presented here present, for neutron fluxes $\varphi$ extremely similar, variations much larger, and these variations are only related to different ways of approximating the cross section $\Sigma$ by different relevant approximations with the same values at $0, \frac12, 1$.\\
Observe nevertheless that we considered in this contribution an idealized model, we refer to \cite{SYNASC16} for a reference problem with actual data.\\
One can note that the macroscopic cross section that we consider here is rather steep, which could increase the different values of $(k^*-1)/k^*$, nevertheless for each couple of cases $p, q$, the difference $\vert\vert  \Sigma^p-\Sigma^q\vert\vert_{\infty}$ and $\vert\vert \varphi^p-\varphi^q\vert\vert_{H^1([0, 1])}$ are small. For example, the difference between $\Sigma$ for case 2 and 3 is $\frac18\vert \sigma_0+\sigma_1-2\sigma_{1/2}\vert$, in the toy model this provides $\frac18$, that is $2,2\%$ of the mean value. The observations on $\varphi$ are based on  Figure \ref{double-psi-phi-hh2} and not on norm study.

\begin{figure}
  \centerline{{\includegraphics[width = .50 \textwidth] {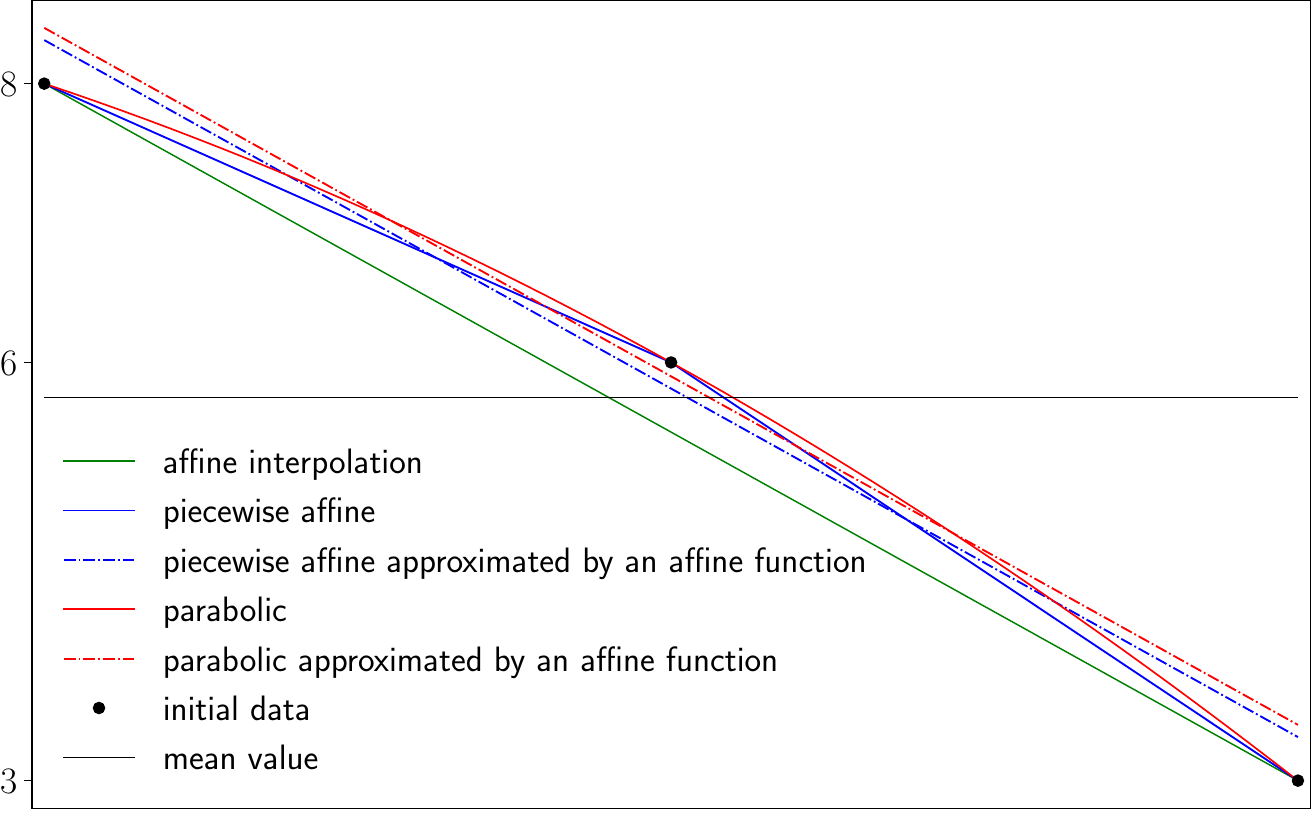}} {\includegraphics[width = .50 \textwidth] {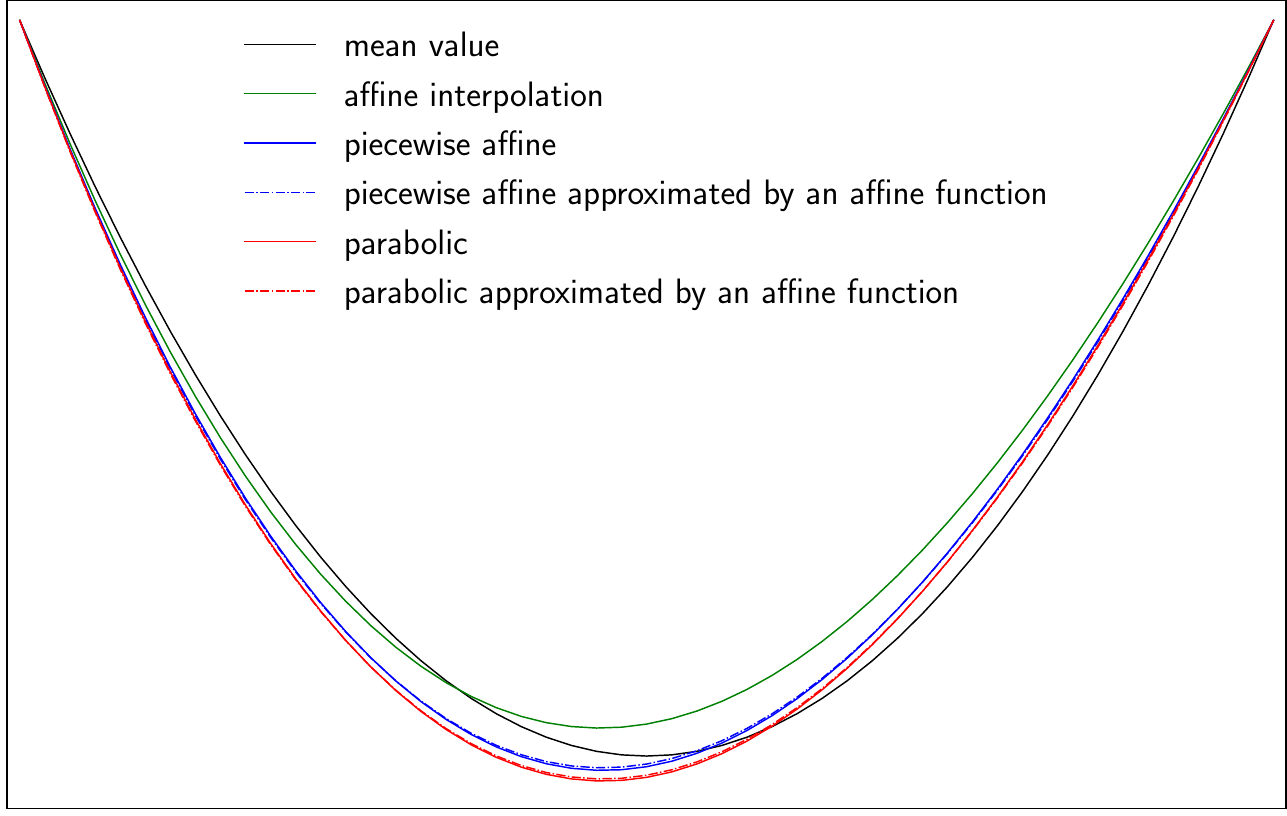}}}
    \caption{Function $\, \Sigma \, $ and $ \, V \, $ in all cases.}
    \label{sigma}
\end{figure}
\begin{figure}
  \centerline{{\includegraphics[width = .48 \textwidth] {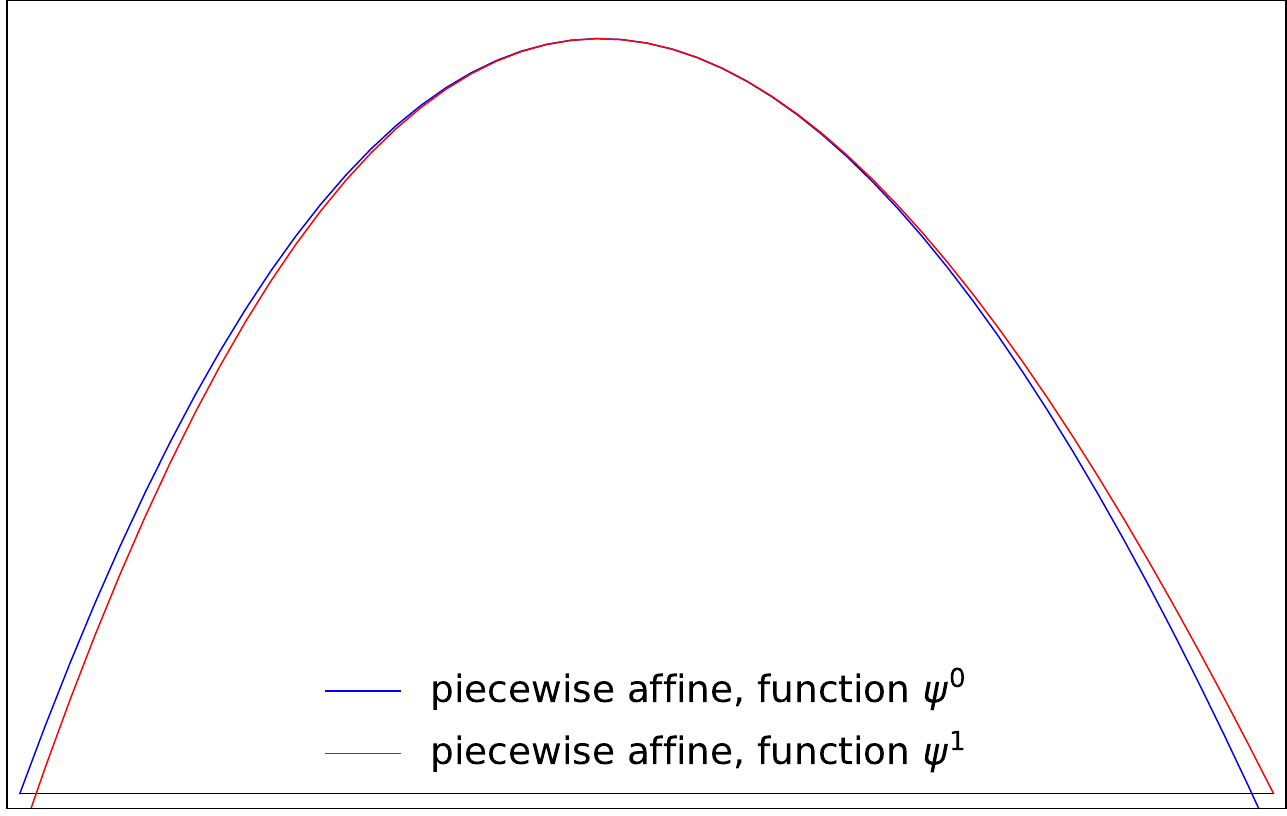}}
    {\includegraphics[width = .48 \textwidth] {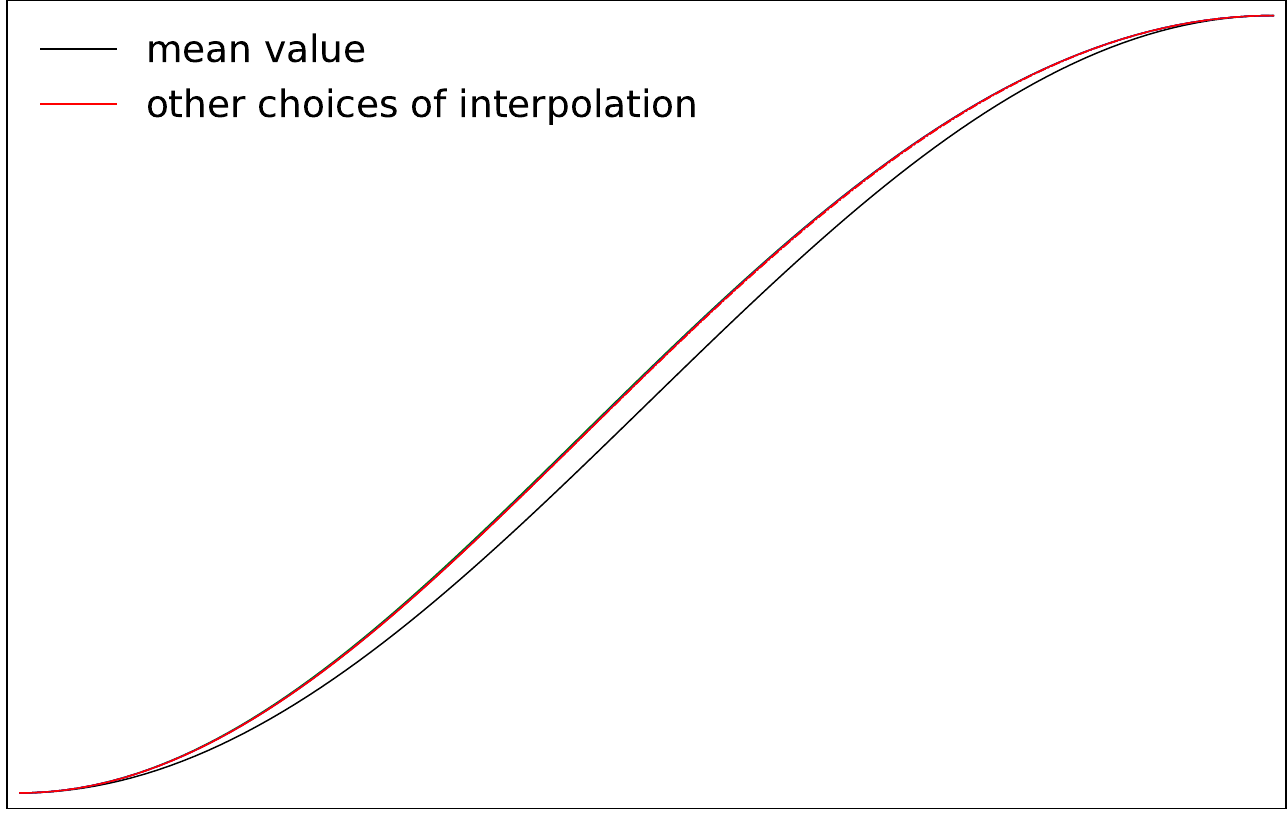}}}
  \caption{Functions $\, h \longmapsto \psi^0(h) \, $ and $ \,  h \longmapsto \psi^1(h)  \, $
    defined at relation (\ref{psi0-psi1-fev2025})  on the left. Function   $ \,  z \longmapsto  h(z)  \, $ on the right;
    a set if 5 curves are superimposed corresponding to affine interpolation, piecewise affine interpolation, piecewise affine approximated by an affine function,
    parabolic interpolation, and  parabolic approximated by an affine function.}
    \label{double-psi-phi-hh}
\end{figure}

\begin{figure}
  
{\includegraphics[width = .60 \textwidth] {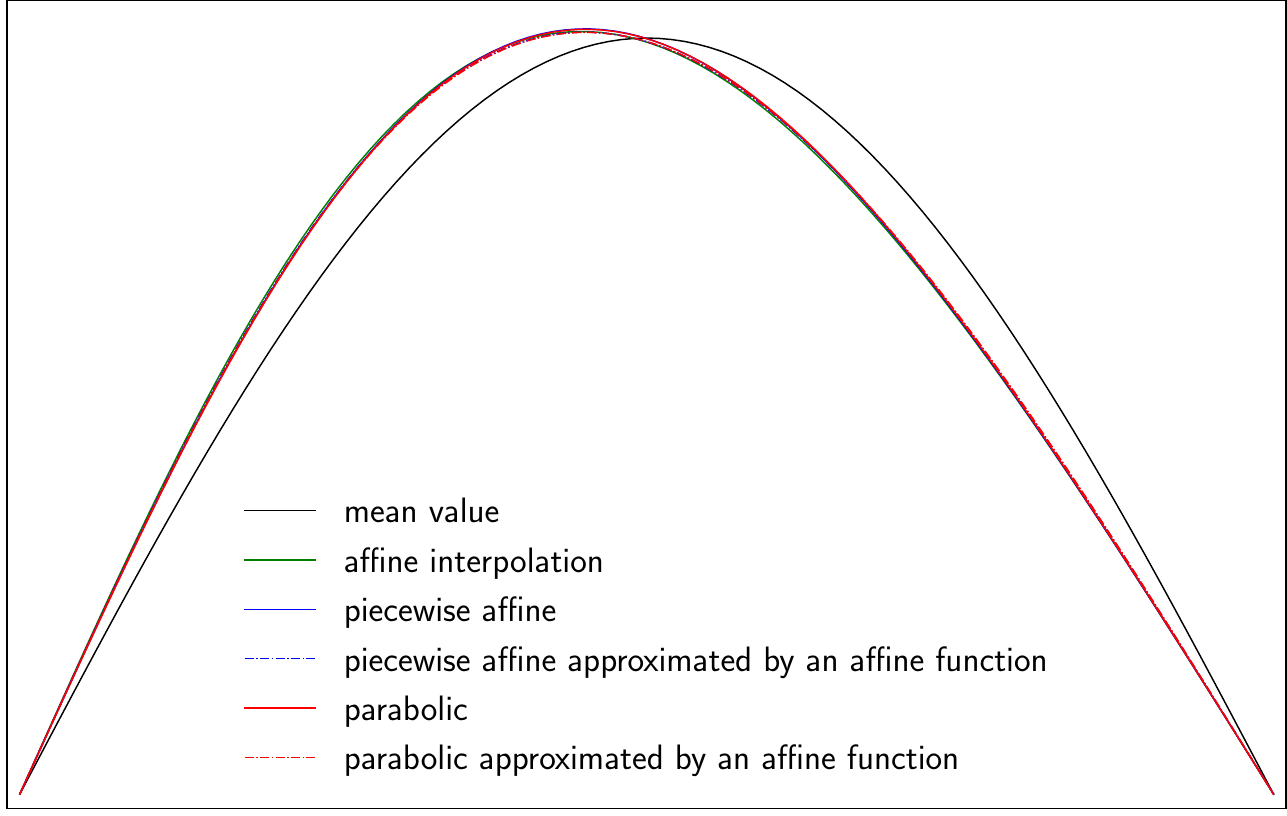}}
   
  \caption{Functions 
    $\,   z \longmapsto  \varphi (z) \, $ in all cases listed.}
    \label{double-psi-phi-hh2}
\end{figure}

\section{Conclusion}
\label{sec:conclu}

We considered in this paper a simplified idealized one-dimensional coupled model for neutronics and thermo-hydraulics.
A numerical method computing the unique solution of this coupled model $(k, \varphi, h)$, $ \, k \, $ being the multiplication factor, $\varphi$ being the neutron flux profile, and $h$ being the enthalpy of the fluid is based on the Crank-Nicolson scheme. Another point of view (namely evaluation incomplete elliptic integrals, which are well known special functions) allows to obtain analytical results, which show that the numerical method is extremely accurate. We observe important (meaning more than the usual range generally used as benchmarks) differences in the multiplication factor, even
if the neutron flux is really similar. \\
Future work concerns, for example, increasing the number of discretization points of cross sections (the extension of the numerical approach is straightforward, the extension of the analytical method could be possible (see Sections 17.1.1 to 17.1.5 of \cite{AS}), or use of the analytic formulae for sensitivity analysis.

\section{Appendix}
The possible cases for the elliptic integrals considered in this paper are:
\begin{lemma} \label{integrales-elliptiques}
(i)    Given two positive reals $ \, a \, $ and $ \, b \, $ such that $ \, 0 < a < b $, we have
\begin{equation}\label{cas-racines-plus-grandes-que-un-ou-negatives}
  \int_{-a}^{\,a} {{\dd T}\over{\sqrt{(a^2 - T^2) \, (b^2 - T^2)}}}  = {{2}\over{b}} \, K(m) \,,\,\, m = {{a^2}\over{b^2}} \, .  
\end{equation}
\begin{equation}\label{cas-racines-de-part-et-dautre}
  \int_{a}^{\,b} {{\dd T}\over{\sqrt{(T^2 - a^2) \, (b^2 - T^2)}}}  = {{1}\over{b}} \, K(m) \,,\,\, m = 1 - {{a^2}\over{b^2}} \, .  
\end{equation}
(ii)   Given a positive real  $ \, a \, $ and a non null real number $ \, b $, we have
\begin{equation}\label{cas-sigma-egal-un}
  \int_{-a}^{\,a} {{\dd T}\over{\sqrt{(a^2 - T^2) \, (b^2 + T^2)}}}  = {{2}\over{|b|}} \, K(m) \,,\,\, m = -{{a^2}\over{b^2}} \, .  
\end{equation}
\end{lemma}
\begin{proof}
  Cut the first integral into two equal parts, between $-a$ and $0$, and between $0$ and $a$, then introduce the change of variable $ \, T = a \, \sin \theta \, $ with $ \, 0 \leq \theta \leq  {{\pi}\over{2}}$. Then

  \centerline {$  \sqrt{a^2-T^2} = a \, \cos \theta $,
    $ \, \dd T = a \,  \cos \theta \, \dd \theta $, $ \,  \sqrt{b^2-T^2} = b \, \sqrt{1 - {{a^2}\over{b^2}} \, \sin^2 \theta}  $}

  \noindent
  and the first relation is established. 
  The same calculus conducts to

  \noindent $   \sqrt{b^2+T^2} = |b|  \, \sqrt{1 - \big(-{{a^2}\over{b^2}}\big) \, \sin^2 \theta} \, $ and the negative value for the parameter
$ \, m \, $ in the third relation is clear.
For the integral (\ref{cas-racines-de-part-et-dautre}), we consider the change of variables
$ \, T = b \, \sqrt{1-m\sin^2 \theta} \,$  with $\, T(0) = b \, $ and $ \, T({{\pi}\over{2}}) = a $. Then $ \, a = b \, \sqrt{1-m} \, $ and
$ \,  m = 1 - {{a^2}\over{b^2}} \in (0,\, 1) $. We have on one hand $ \, T \, \dd T = - b^2 \, m \, \sin \theta \, \cos \theta \, \dd \theta \, $
and on the other hand $ \, T^2 - a^2 = b^2 \, m \, \cos^2 \theta $. Then

  \noindent \qquad  $ 
\int_{a}^{\,b} \! {{\dd T}\over{\sqrt{(T^2 - a^2) \, (b^2 - T^2)}}}
= \int_{\pi/2}^{\, 0} \! \big({{1}\over{T}}\big) \, (- b^2 \, m \, \sin \theta \, \cos \theta) \,{{\dd \theta}\over{b \, \sqrt{m} \,\sin \theta \,\sqrt{m}  \cos \theta }} $

\noindent \qquad \qquad \qquad \qquad  $ 
= \int_0^{\,\pi/2} \! {{1}\over{b}} \, {{\dd \theta}\over{\sqrt{1-m\sin^2 \theta}}} $
$ = \smash{ {{1}\over{b}} \, K(m) }   $.
\noindent
The relation (\ref{cas-sigma-egal-un}) comes from $ \, T = a \, \sin \theta \, $  then $\theta\in (-\frac{\pi}{2}, \frac{\pi}{2})$ and  $ \, \dd T = a \,  \cos \theta \, \dd \theta $, $b^2+T^2= b^2(1-m\sin^2\theta)$ for $m=-\frac{a^2}{b^2}$, hence the result.
\end{proof}

\end{document}